        \theoremstyle{plain}
        \newtheorem{prop}{Property}[section]
        \theoremstyle{remark}
        \newtheorem{remark}{\bf Remark}[section]
        \theoremstyle{remark}
        \theoremstyle{remark}
\newcommand{\demi}{\frac{1}{2}}
\newcommand{\tdemi}{\textstyle \frac{1}{2}}
\newcommand{\R}{{\mathbb{R}}}
\newcommand{\dt}{\partial_t}
\newcommand{\dx}{\partial_x}
\newcommand{\cint}[1]{\left\langle #1\right\rangle}
\newcommand{\Dv}{\Delta v}
\newcommand{\Dx}{\Delta x}
\newcommand{\Dt}{\Delta t}
\newcommand{\ximdemi}{x_{i-\frac{1}{2}}}
\newcommand{\xipdemi}{x_{i+\frac{1}{2}}}
\newcommand{\taunpuni}{\tau_i^{n+1}}
\newcommand{\fni}{f^n_i}
\newcommand{\fnpuni}{f^{n+1}_i}
\newcommand{\fnipun}{f^n_{i+1}}
\newcommand{\fnimun}{f^n_{i-1}}
\newcommand{\fnik}{f_{i,k}^n}
\newcommand{\fnpunik}{f_{i,k}^{n+1}}
\newcommand{\Vni}{{\cal V}^n_i}
\newcommand{\vminni}{v^n_{min,i}}
\newcommand{\vmaxni}{v^n_{max,i}}
\newcommand{\vnik}{v^n_{i,k}}
\newcommand{\Vnipun}{{\cal V}^n_{i+1}}
\newcommand{\Vnpuni}{{\cal V}^{n+1}_i}
\newcommand{\vminnpuni}{v^{n+1}_{min,i}}
\newcommand{\vmaxnpuni}{v^{n+1}_{max,i}}
\newcommand{\vnpunik}{v^{n+1}_{i,k}}
\newcommand{\Dvni}{\Delta v^n_i}
\newcommand{\Dvnpuni}{\Delta v^{n+1}_i}
\newcommand{\Uni}{U^n_i}
\newcommand{\Unpuni}{U^{n+1}_i}
\newcommand{\Unpunibar}{U^{n+1,\star}_i}
\newcommand{\Unibar}{U^{n,\star}_i}
\newcommand{\unpuni}{u^{n+1}_i}
\newcommand{\Tnpuni}{T^{n+1}_i}
\newcommand{\cintni}[1]{\left\langle #1\right\rangle_{\Vni}}
\newcommand{\cintnipun}[1]{\left\langle #1\right\rangle_{\Vnipun}}
\newcommand{\cintnpuni}[1]{\left\langle #1\right\rangle_{\Vnpuni}}
\newcommand{\K}{K}
\newcommand{\fnibar}{\bar{f}^n_i}
\newcommand{\fnipunbar}{\bar{f}^n_{i+1}}
\newcommand{\fnimunbar}{\bar{f}^n_{i-1}}
\newcommand{\Kn}{\text{Kn}}
\newcommand{\modif}[1]{#1}
\begin{document}

\begin{center}

{\bf Local discrete velocity grids for deterministic rarefied flow simulations }

\vspace{1cm}
S. Brull$^1$, L. Mieussens$^2$

\bigskip
$^1$Univ. Bordeaux, IMB, UMR 5251, F-33400 Talence, France.\\
CNRS, IMB, UMR 5251, F-33400 Talence, France. \\
({\tt Stephane.Brull@math.u-bordeaux1.fr})

\bigskip
$^2$Univ. Bordeaux, IMB, UMR 5251, F-33400 Talence, France.\\
CNRS, IMB, UMR 5251, F-33400 Talence, France. \\
INRIA, F-33400 Talence, France.\\
({\tt Luc.Mieussens@math.u-bordeaux1.fr})

\end{center}

\begin{abstract}
Most of numerical methods for deterministic simulations of rarefied
gas flows use the discrete velocity (or discrete ordinate)
approximation. In this approach, the kinetic equation is approximated
with a global velocity grid.  The grid must be large and fine enough
to capture all the distribution functions, which is very expensive for
high speed flows (like in hypersonic aerodynamics). In this article,
we propose to use instead different velocity grids that are local in
time and space: these grids dynamically adapt to the width of the
distribution functions. The advantages and drawbacks of the method are
illustrated in several 1D test cases.
\end{abstract}
Keywords: kinetic equations, discrete velocity model, deterministic
method, rarefied gas dynamics
\\

\section{Introduction}

Most of deterministic numerical methods for rarefied flow simulations
are based on a discrete velocity approximation of the Boltzmann
equation, see for instance~\cite{TSA1993,rogier-schneider, buet_ttsp,
  BPS, BR_1999,luc_jcp,PH2002,KAAFZ,T_cicp,HG2012}.

In almost all these methods, the distribution function is approximated
with a global velocity grid, for every point in the position space,
for every time. This makes the method robust (conservation, entropy
dissipation, positivity, stability, etc.) and relatively simple, but
very expensive for many cases. Indeed, the grid must be large enough
to contain all the distribution functions of the flow, and fine enough
to capture every narrow distributions. The first constraint makes the
grid very large for high speed flow with large temperatures. The
second constraint makes the grid step very small, and hence a very
large number of discrete velocities are needed. This
is for instance the case for atmospheric re-entry problems, where the
flow is hypersonic. These problems, especially in 3D, are very
difficult to be simulated with such methods, due to the discrete
velocity grid that contains a prohibitively large number of points.

Of course, particle solvers like the popular Direct Simulation
  Monte-Carlo method (DSMC) do not suffer of such
problems~\cite{bird}. However, if one is interested in deterministic
Eulerian simulations, it is important to find a way to avoid the use
of a too large number of discrete velocities. Up to our knowledge,
there are a few papers on this subject. Aristov proposed
in~\cite{Aristov77} an adaptive velocity grid for the 1D shock
structure calculation. However, the approach is very specific to this
test case and has never been extended. More recently, Filbet and
Rey~\cite{FR2013} proposed to use a rescaling of the velocity variable
to make the support of the distribution independent of the temperature
and of the macroscopic velocity. Then the Boltzmann equation is
transformed into a different form (with inertia terms due to the
change of referential). In~\cite{BCHM_saragosse}, Baranger et
al. proposed an algorithm to locally refine the velocity grid wherever
it is necessary and to coarsen it elsewhere. But this approach, which
has been proved to be very efficient for steady flows, is still based
on a global grid, and cannot be efficient enough for unsteady flows.
Finally, Chen et al.~\cite{CXLC2012} proposed to use a different
velocity grid for each point in the position space and every time:
from one point to another one, the grid is refined or coarsen by using
an Adaptive Mesh Refinement (AMR) technique. This seems to be
very efficient, but all the grids have the same bounds (they all use
the same background grid). It seems that a similar method was proposed
at the same time by Kolobov et al., see~\cite{KAF2011,KA2012}.

In this paper, we propose a method that has several common features
with the method of~\cite{XH2010} and~\cite{FR2013} but is still very
different. The main difference is that each distribution is
discretized on its own velocity grid: each grid has its own bounds and
step that are evolved in time 
and space by using the local macroscopic velocity and
temperature. These macroscopic quantities are estimated by solving the
local conservation laws of mass, momentum, and energy. 
The
interaction between two space cells requires to use reconstruction
techniques to approximate a distribution on different velocity
grids. This paper is a preliminary work, for 1D flows, that proposes a
complete algorithmic approach. Several test cases illustrate the
properties of the method and show its efficiency.

The outline of the paper is the following. In section~\ref{sec:model} is
presented a simple 1D kinetic Bhatnagar-Gross-Krook (BGK) model and its standard discrete
velocity approximation. In section~\ref{sec:LDVmethod}, the local discrete
velocity grid approach is described. The numerical tests are given in section~\ref{sec:res}.

\section{A simple 1D kinetic model and its standard velocity discretization}
\label{sec:model}

\indent We consider a one-dimensional gas described by the mass
density of particles $f(t,x,v)$ that at time $t$ have the position $x$
and the velocity $v$ (note that both position $x$ and velocity
  $v$ are scalar). The corresponding macroscopic quantities can be
obtained by the moment vector $U(t,x)=\cint{mf(t,x,.)}$, where
$m(v)=(1,v,\tdemi |v|^2)$ and $\cint{\phi}=\int_\R \phi(v)\, dv$ for
any velocity dependent function. This vector can be written component
wise by $U=(\rho,\rho u, E)$, where $\rho$, $\rho u$, and $E$ are the
mass, momentum, and energy densities. The temperature $T$ of the gas
is defined by relation $E=\tdemi \rho |u|^2+\tdemi \rho R T$, where
$R$ is the gas constant.

The evolution of the gas is governed by the following BGK equation
\begin{equation}  \label{eq-bgk}
\dt f + v\dx f = \frac{1}{\tau}(M(U)-f), 
\end{equation}
where $M(U)$ is the local Maxwellian distribution defined through the
macroscopic quantities $U$ of $f$ by
\begin{equation}   \label{eq-maxw}
M(U)=\frac{\rho}{\sqrt{2\pi R T}}\exp\bigl(-\frac{|v-u|^2}{2RT}\bigr),
\end{equation}
and $\tau=CT^\omega/\rho$ is the relaxation time. The constant $C$ and
$\omega$ will be given in section \ref{sec:res} for each test case.

From this equation, it
is easy to establish the so called conservation laws that describe the
time evolution of the moment vector $U$:
\begin{equation}  \label{eq-cons}
\dt U + \dx \cint{v mf} = 0.
\end{equation}

For the numerical approximation of equation~(\ref{eq-bgk}), a popular
method is the discrete ordinate--or discrete velocity--method. It
consists in choosing a grid ${\cal V}$ of $K$ points $v_k$, and then in
replacing the kinetic equation~(\ref{eq-bgk}) by the finite set of $K$
equations
\begin{equation}  \label{eq-bgkk}
\dt f_k + v_k\dx f_k = \frac{1}{\tau}(M_k(U)-f_k)
\end{equation}
where $f_k(t,x)$ is an approximation of $f(t,x,v_k)$ and $M_k(U)$ is
an approximation of $M(U)(v_k)$.  This approximation
  is the discrete Maxwellian whose parameters are such that it has the
  same discrete moments as the distribution $f$, as proposed
  in~\cite{luc_m3as,luc_jcp}. This gives a conservative discrete
  velocity model.

In order to describe the solution correctly, the discrete velocity grid
${\cal V}$ must capture all the distribution functions, that is to say
at any time $t$, and for every position $x$. This means that ${\cal
  V}$ must be large enough to capture distributions with large mean
velocity or large temperature, and fine enough to capture
distributions with small temperature. See an illustration of this
problem in figure~\ref{fig:hermes}. In this figure, we show a 2D
aerodynamical flow with three typical distributions functions (one in
the upstream flow, one in the shock, and another one at the
boundary). The corresponding velocity grid is shown in the same
figure.

To construct such a grid,
we start with a remark on the local
  Maxwellians. Since a Maxwellian centered on $u$ and of temperature
  $T$ decreases very fast for large $v$, it is very small outside any
  interval $[u-c\sqrt{RT},u+c\sqrt{RT}]$ with $c$ sufficiently
  large. A good choice for such an interval is obtained with
  $c=3\sqrt{RT}$: as it is well know in statistics for the normal
  distribution, 99\% of the particles described by the local
  Maxwellian have their velocity in this interval. For kinetic
  simulations, we generally take a slightly larger interval with
  $c=4$. The corresponding interval $[u-4\sqrt{RT},u+4\sqrt{RT}]$ is
  what we call the ``support'' of the local Maxwellian and contains
  the ``essential'' information on the distribution.
When a distribution is not too far from its corresponding local
Maxwellian (which is true when the Knudsen number is not too small,
away from shock and boundary layers), most of particles described by
this distribution have their velocity localized in the support of the corresponding
local Maxwellian. This is interesting, since this support can be
analytically determined as a function of the macroscopic velocity and
temperature, as it has been shown above. 

Consequently, a first constraint for the global velocity grid is
that its bounds $v_{min}$ and $v_{max}$ satisfy the following
inequalities:
\begin{equation}\label{eq-gbound} 
  v_{min} \leq \min_{t,x}\bigl(u(t,x)-4\sqrt{RT(t,x)}\bigr), \qquad 
 v_{max} \geq \max_{t,x}\bigl(u(t,x)+4\sqrt{RT(t,x)}\bigr),
\end{equation}
so that all the distributions can be captured in the grid.
Since it is reasonable to require that there are at least
three points between the inflexion points of any Maxwellian, the step
of the global grid should be such that
\begin{equation}\label{eq-gstep} 
  \Dv \leq \min_{t,x}\sqrt{RT(t,x)}.
\end{equation}
Of course, such an approach requires to first estimate some bounds on
the macroscopic fields that are global in time and space.

Note that the points of ${\cal V}$ are not necessarily uniformly
distributed, since the grid could be refined only wherever it is
necessary and coarsened elsewhere, as it is proposed
in~\cite{BCHM_2012} for steady flows (with a simple and automatic way
to define such a grid). However, for unsteady problems, the situation
is more complex. Indeed, first, the estimations of the correct bounds
and step of the grid are not necessarily available for every problems
(the velocity or the temperature could reach much larger values that
were not expected at some times of the simulation), like in complex
shock interactions, for instance. Moreover, even if it is possible,
this could lead to a grid which is extremely large and dense, hence
leading to a very expensive simulation (see an example in
section~\ref{subsec:blast}).
Finally, when there are distributions that are very far from their local Maxwellian, their
support can be quite different, and there is no analytical way to
determine it. This can require several tries  to
find a correct global velocity grid, which is also expensive.

It is therefore very attractive to try to use a {\it local} velocity
discretization of the distribution function, which means to use {\it
  local discrete velocity grids} (LDV) for each time and position. In
other words,\label{review-page8} at each time $t$ and position $x$, we would like
  the corresponding distribution $f(t,x,.)$ to be approximated on its
  own velocity grid, which might be different from the grids used at
  other times or other positions. The clear advantage of this idea is
that we can define an optimally small grid for each distribution, thus
we avoid the problems mentioned above. This approach is developed in
the next section.

\section{A local discrete velocity grid approach}
\label{sec:LDVmethod}


   \subsection{The method}
   \label{subsec:ldv}
\indent   Now we assume that at time $t_n$, the distribution function in each
   space cell $[\ximdemi,\xipdemi]$ is approximated on a set $\Vni$ of
   $\K$ local discrete velocities. For simplicity, we assume here that all the local
   grids have the same number of points $K$, and the points are
   uniformly distributed. The first point is denoted by $\vminni$ and the
   last one by $\vmaxni$. That is to say, we have
\begin{equation*}
  \Vni=\left\lbrace\vnik=\vminni+(k-1)\Dvni, \quad k 
\text{ from } 1 \text{ to } \K 
\right\rbrace,
  \quad \text{ where } \quad \Dvni=\frac{\vmaxni-\vminni}{\K-1}.
\end{equation*}
On this local grid, $f(t_n,x_i,.)$ is approximated by $\K$ values that
are stored in the vector $\fni=(\fnik)_{k=1}^\K$. Each discrete value
$\fnik$ is an approximation of $f(t_n,x_i,\vnik)$. 

The corresponding macroscopic quantities $U(t_n,x_i)$
are approximated by $\Uni$ with the following
quadrature formula
\begin{equation}\label{eq-Uni} 
  \Uni=\cintni{m\fni}=\sum_{k=1}^\K m(\vnik) \fnik \omega_k,
\end{equation}
where the $\omega_k$ are the weights of the quadrature. 
In this paper, the trapezoidal quadrature formula is
  used: $\omega_1=\omega_K=1/2$ and $\omega_k=1$ for $k$ from 2 to $K-1$.

When one wants to compute an approximation of $f$ at the next time
step $t_{n+1}$, two problems occur. First, how to determine the local
discrete velocity grid $\Vnpuni$? We will show below that this can be
simply made by using the conservation laws. Second, how to exchange
information between two space cells, since the local grids are not the
same? This is where we use some interpolation procedure in the
method. Let us now describe our algorithm step by step. 

\underline{\it Step 1:} Macroscopic quantities at $t_{n+1}$. 

\smallskip

We approximate the conservation relation~(\ref{eq-cons}) with the
following first order upwind scheme
\begin{equation}  \label{eq-Unpunibar}
\Unpuni=\Uni -\frac{\Dt}{\Dx}\left(\Phi^n_{i+\demi}- \Phi^n_{i-\demi}\right),
\end{equation}
where the numerical fluxes are defined by
\begin{equation}  \label{eq-numflux}
\Phi^n_{i+\demi}=\cintni{v^+m\fni} + \cintnipun{v^-m\fnipun} ,
\end{equation}
which is indeed an approximation of the flux
$\cint{vmf(t_n,x_{i+\demi})}$ at the cell interface. Here, we use the
standard notation $v^\pm=(v\pm|v|)/2$. Note that each
half flux is computed on the local velocity grid of the corresponding
distribution. The vector $\Unpuni$ is an approximation of
$U(t_{n+1},x_i)$, and we note $\unpuni$ and $\Tnpuni$ the
corresponding velocity and temperature.\\

\underline{\it Step 2:} discrete velocity grid $\Vnpuni$. 

\smallskip

We define this grid by using the new velocity and temperature
$\unpuni$ and $\Tnpuni$ to get the bounds
\begin{equation}  \label{eq-boundsnpuni}
\vminnpuni=\unpuni-4\sqrt{R\Tnpuni} \quad \text{ and } \quad 
\vmaxnpuni=\unpuni+4\sqrt{R\Tnpuni}.
\end{equation}
Then the new grid $\Vnpuni$ is defined as in the previous time step,
that is to say by  
\begin{equation}\label{eq-Vnpuni} 
\begin{split}
 \Vnpuni=\lbrace\vnpunik & =  \vminnpuni+(k-1)\Dvnpuni, 
\quad
 k \text{ from } 1 \text{ to } \K  \rbrace,\\
& \text{ with }\Dvnpuni=(\vmaxnpuni-\vminnpuni)/(\K-1). 
\end{split}
\end{equation}

\underline{\it Step 3:} distribution function at time $t_{n+1}$. 

\smallskip

Here, equation~(\ref{eq-bgk}) is approximated by a first order
upwind scheme, with an implicit relaxation term. If the velocity
variable is not discretized, we get for every $v$:
\begin{equation*}
\begin{split}
  \fnpuni(v)=\fni(v) & - \frac{\Dt}{\Dx}v^+(\fni(v)-\fnimun(v)) 
- \frac{\Dt}{\Dx}v^-(\fnipun(v)-\fni(v))  \\
& + \frac{\Dt}{\taunpuni}(M(\Unpuni)(v)-\fnpuni(v)).
\end{split}
\end{equation*}
If now we take into account that each distribution $\fnpuni$, $\fni$,
$\fnimun$, and $\fnipun$ are defined on their own local velocity grid,
this scheme must be modified by using a reconstruction procedure. 

The discrete distributions $\fni$,
$\fnimun$, and $\fnipun$ are used to reconstruct piecewise continuous
in velocity functions $\fnibar$,
$\fnimunbar$, and $\fnipunbar$ that are
defined as follows:
\begin{equation}  \label{eq-deffnibar}
\fnibar(v)=\left\lbrace 
\begin{split}
& p^n_i(v) \quad \text{ if } \vminni \leq v \leq \vmaxni\\
& 0 \quad \text{ else},
\end{split}
\right.
\end{equation}
where $p^n_i$ is a piecewise continuous function of $v$ constructed
through the values $(\vnik,\fnik)_{k=1}^\K$, like a piecewise
interpolated polynomial. {Since any kind of interpolation could be
  used, }this reconstruction will be discussed in section
\ref{interpol}. Then we define the discrete values of $\fnpuni$ on its
grid $\Vnpuni$ by
\begin{equation}  \label{eq-fnpunik}
\begin{split}
  \fnpunik=\fnibar(\vnpunik) & - \frac{\Dt}{\Dx}{\vnpunik}^+(\fnibar(\vnpunik)-\fnimunbar(\vnpunik)) 
- \frac{\Dt}{\Dx}{\vnpunik}^-(\fnipunbar(\vnpunik)-\fnibar(\vnpunik))  \\
& + \frac{\Dt}{\taunpuni}(M_k(\Unpuni)-\fnpunik),
\end{split}
\end{equation}
for $k=1$ to $\K$. 

Our scheme is then given by
relations~(\ref{eq-Unpunibar}--\ref{eq-fnpunik}). Now, we give some
properties of this scheme.

\begin{prop} \label{prop:cons} 
  \modif{For scheme~(\ref{eq-Unpunibar}--\ref{eq-fnpunik})}, the global mass, momentum, and energy are
  constant (the scheme is conservative): 
\begin{equation*}
\sum_i\Uni \Dx  = \sum_i U^0_i\Dx.
\end{equation*}
\end{prop}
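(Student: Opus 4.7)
The plan is to establish the conservation statement by induction on $n$, showing that each time step preserves the total moments $\sum_i \Uni \Dx$. Only Step 1 of the algorithm (equation~(\ref{eq-Unpunibar})) enters this argument, since the macroscopic update is decoupled from the choice of local velocity grids and from the reconstruction used in Step 3.

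The key observation is that the numerical flux~(\ref{eq-numflux}) at the interface $x_{i+\demi}$ is an object attached to that interface rather than to either adjacent cell: $\Phi^n_{i+\demi}$ is built from $\cintni{v^+ m \fni}$ on one side and $\cintnipun{v^- m \fnipun}$ on the other, and exactly the same quantity appears as $\Phi^n_{(i+1)-\demi}$ in the update of cell $i+1$. Thus the flux seen by cell $i$ on its right coincides with the flux seen by cell $i+1$ on its left, despite the fact that the two cells carry different local velocity grids.

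Given this, the proof reduces to summing~(\ref{eq-Unpunibar}) over $i$: the flux contributions form a telescoping sum, which vanishes under the standard assumption of periodic boundary conditions (or of compactly supported data, so that the boundary fluxes are zero). Multiplying by $\Dx$ yields
\begin{equation*}
\sum_i \Unpuni \,\Dx = \sum_i \Uni \,\Dx,
\end{equation*}
and an immediate induction on $n$ gives the claim.

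I do not expect a real obstacle here. The only subtle point is the consistency of the fluxes at each interface, which is built into definition~(\ref{eq-numflux}); once this is pointed out, the argument is a textbook telescoping. The boundary assumption should be stated explicitly, since the conservation property relies on it.
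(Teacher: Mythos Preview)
Your argument is correct and is essentially the paper's own proof: sum~(\ref{eq-Unpunibar}) over $i$, use that the interface flux~(\ref{eq-numflux}) is single-valued so the sum telescopes, and conclude by induction. Your explicit mention of the boundary assumption and of the fact that Steps~2--3 play no role adds useful clarity but does not change the approach.
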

\begin{proof}
  This is a direct consequence of the discretization of the
  conservation laws~(\ref{eq-Unpunibar}) with a conservative scheme:
  indeed, when we take the sum of~(\ref{eq-Unpunibar}) for every $i$,
  the sum of the numerical fluxes cancels out, and we obtain that the
  total quantities do not change during one time step. This gives the result.
\end{proof}
\modif{Even if this property is
obvious, we believe it deserves to be noted: first, we point out that
the scheme is not given by~(\ref{eq-Unpunibar}) only, but by all the relations between
~(\ref{eq-Unpunibar}) and~(\ref{eq-fnpunik}). Then relation~(\ref{eq-Unpunibar}) has to be seen as macroscopic
conservation laws in which the fluxes are computed by using the
discrete kinetic equation~(\ref{eq-fnpunik}). Even if~(\ref{eq-fnpunik}) is not conservative, the
use of~(\ref{eq-Unpunibar}) implies that the macroscopic mass, momentum, and energy
densities are conserved. This is a property shared by several recent
schemes based on a dual macro-micro time evolutions or IMEX methods,
see for instance~\cite{PP2007,BLM_2008,DP_2013}.
}

\begin{prop} \label{prop:pos}
Assume that for every cell $i$, $\fni$ is non negative at each point of
  its local velocity grid and  that the corresponding reconstructed
  piecewise function $\fnibar$ is non negative for every
  $v$. Then, under the CFL condition $\Dt\leq
  \Dx/\max_{i,k}(|\vnpunik|)$, $\fnpunik$ is non negative at each point
  of its local velocity grid, for every space cell.
\end{prop}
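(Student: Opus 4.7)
The plan is to treat the update formula (\ref{eq-fnpunik}) as an implicit equation for $\fnpunik$, solve for it explicitly, and then check that the result is a non-negative combination of non-negative quantities.

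First I would move the relaxation term to the left-hand side of (\ref{eq-fnpunik}), which gives
\begin{equation*}
\bigl(1+\tfrac{\Dt}{\taunpuni}\bigr)\fnpunik = \fnibar(\vnpunik) - \tfrac{\Dt}{\Dx}{\vnpunik}^+\bigl(\fnibar(\vnpunik)-\fnimunbar(\vnpunik)\bigr) - \tfrac{\Dt}{\Dx}{\vnpunik}^-\bigl(\fnipunbar(\vnpunik)-\fnibar(\vnpunik)\bigr) + \tfrac{\Dt}{\taunpuni}M_k(\Unpuni).
\end{equation*}
Regrouping the transport terms, the coefficient in front of $\fnibar(\vnpunik)$ becomes $1-\tfrac{\Dt}{\Dx}(\vnpunik^+-\vnpunik^-) = 1-\tfrac{\Dt}{\Dx}|\vnpunik|$, while $\fnimunbar(\vnpunik)$ is multiplied by $\tfrac{\Dt}{\Dx}\vnpunik^+\geq 0$ and $\fnipunbar(\vnpunik)$ by $-\tfrac{\Dt}{\Dx}\vnpunik^-\geq 0$.

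Under the CFL condition $\Dt\leq \Dx/\max_{i,k}|\vnpunik|$, the coefficient $1-\tfrac{\Dt}{\Dx}|\vnpunik|$ is non-negative. By hypothesis, each of the reconstructed values $\fnibar(\vnpunik)$, $\fnimunbar(\vnpunik)$, $\fnipunbar(\vnpunik)$ is non-negative, and the discrete Maxwellian $M_k(\Unpuni)$ is non-negative by its very definition (\ref{eq-maxw}) evaluated at $\vnpunik$, up to the construction ensuring the matching of discrete moments (which preserves positivity since $M_k$ retains the Gaussian form). Hence the right-hand side is a non-negative linear combination of non-negative quantities.

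Finally, since $1+\Dt/\taunpuni>0$, dividing yields $\fnpunik\geq 0$ for every $k$ and every space cell, which is the claim. The main subtlety — and essentially the only thing to check carefully — is the sign analysis of the coefficient of $\fnibar(\vnpunik)$, which is exactly where the CFL constraint enters; the rest follows because the implicit treatment of the relaxation term turns it into a pure source contribution with a positive amplification factor, avoiding any additional stability condition linked to $\taunpuni$.
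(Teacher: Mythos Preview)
Your proof is correct and follows the same approach as the paper: rewrite the update~(\ref{eq-fnpunik}) as a (convex) linear combination of $\fnibar(\vnpunik)$, $\fnimunbar(\vnpunik)$, $\fnipunbar(\vnpunik)$, and $M_k(\Unpuni)$, and observe that the CFL condition makes all the coefficients non-negative. You have simply written out the details that the paper leaves implicit, including the explicit handling of the semi-implicit relaxation term via the factor $1+\Dt/\taunpuni$.
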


\begin{proof}
  As it is standard for the upwind scheme for convection problems, it
  is sufficient to note that~(\ref{eq-fnpunik}) can be written as a
  linear combination of $\fnibar$,
$\fnimunbar$, $\fnipunbar$, and $M(\Unpuni)$. The CFL condition of
the proposition ensures that the coefficients of this combination are
non negative, which gives the result.
\end{proof}

We point out that, while the result of this property is rather
  standard, it is in fact quite weak here. Indeed, first, the non
  negativeness of the distribution is obtained only if the
  reconstruction step preserves the non negativeness of the
  $\fni$. This is true for linear interpolation, but it is not for
  many higher order reconstructions. 
Moreover, this property itself is not sufficient to ensure that
  the sequence $\fni$ can be defined at every time step: indeed, step
  2 requires $\Tnpuni$ to be non-negative to define the local grid
  $\Vnpuni$. Unfortunately, it seems hardly possible to prove that this property is
true for the scheme presented above. This why a modified schemes are
presented in the next section.

Finally, we want to comment on the choice of the time step in this
scheme. Indeed, note that, according to property~\ref{prop:pos},
step 3 requires a time step defined through the local grids $\Vnpuni$
at time $t_{n+1}$ to correctly define $\fnpuni$. However, this time
step is already needed at step 1 to define $\Unpuni$, while at this
step, $\Vnpuni$ is not already known. This means that we have to use a
single time step for steps 1 and 3 that also satisfies the CFL
condition based on $\Vnpuni$. A simple algorithm is the following:
\begin{enumerate}
\item[(a)] We choose $\Dt_1= \Dx/\max_{i,k}(|\vnik|)$
\item[(b)] We do step 1 and  step 2.
\item[(c)] For step 3, we compute $\Dt_2= \Dx/\max_{i,k}(|\vnpunik|)$: 
  \begin{itemize}
  \item if   $\Dt_2>\Dt_1$, then $f^{n+1}$ can be advanced with $\Dt_1$
  \item if $\Dt_2<\Dt_1$, then we set $\Dt_1=\Dt_2$, we do not compute
    $f^{n+1}$ but we directly go back to (b) (steps 1 and 2 of the
    scheme are
  done again)
  \end{itemize}
\end{enumerate}

However, note that in practice, we do not need this algorithm: we
always use $\Dt_1= \Dx/\max_{i,k}(|\vnik|)$ without any stability problem.
\modif{Indeed, we carefuly checked the sign of the solution at each time step
and at every space and velocity point, for all the test cases
presented in this paper: we did not observe any loss of positivity and
any stability problem.}

   \subsection{Modified versions of the scheme}
   \label{subsec:modif}

First, note that if we compute the moments of $\fnpuni$ after step 3,
we do not recover the moments $\Unpuni$ defined at step 1. Indeed,
according to~(\ref{eq-fnpunik}) we have
\begin{equation*}
\begin{split}
& \cintnpuni{m \fnpuni}  = \sum_{\Vnpuni}m(\vnpunik)\fnpunik\omega_k \\
& = \sum_{\Vnpuni}m(\vnpunik)\fnibar(\vnpunik) \omega_k 
    - \frac{\Dt}{\Dx} \left(\sum_{\Vnpuni}m(\vnpunik)({\vnpunik}^+\fnibar(\vnpunik)+ {\vnpunik}^-\fnipunbar(\vnpunik)) \omega_k \right.\\
 & \hspace{34ex}    \left.      - \sum_{\Vnpuni}m(\vnpunik) ({\vnpunik}^+\fnimunbar(\vnpunik)  - {\vnpunik}^- \fnibar(\vnpunik))\omega_k \right)\\
& \modif{\qquad + \frac{\Dt}{\taunpuni}\left(\Unpuni-\cintnpuni{m\fnpunik}\right)}. 
\end{split}
\end{equation*}
If we compare the terms of this expression to the definition of $\Unpuni$ given
by~(\ref{eq-Unpunibar}) and~(\ref{eq-numflux}), we find the two
vectors cannot be equal. The reason is that in the first expression,
we have several quantities on the form
$\sum_{\Vnpuni}\phi(\vnpunik)\fnibar(\vnpunik)\omega_k$, while in the
second expression, these quantities are
$\sum_{\Vni}\phi(\vnik)\fnik\omega_k$, and they are not equal in
general since the grids $\Vnpuni$ and $\Vni$ are different. Of course, these
quantities are close, since they approximate the same values, but
they are not equal. 

This means that we have two different approximations of the same
macroscopic values: $\Unpuni$ and the moments of $\fnpuni$. We have
numerically compared these quantities and there is indeed no
significant difference. However, this difference suggests a
modification of the scheme: after step 3, we add one more step in
which we define
$\Unpunibar=\cintnpuni{m \fnpuni} $, and $\Uni$ is replaced by $\Unibar$ in step 1. Then
the modified scheme (called the ``moment
correction method'')  is the following:\\

\underline{\it Step 1:} Macroscopic quantities at $t_{n+1}$. 
\begin{equation}  \label{eq-Unpunibarbar}
\Unpuni=\Unibar -\frac{\Dt}{\Dx}\left(\Phi^n_{i+\demi}- \Phi^n_{i-\demi}\right),
\end{equation}
where the numerical fluxes are defined by
\begin{equation}  \label{eq-numfluxbar}
\Phi^n_{i+\demi}=\cintni{v^+m\fni} + \cintnipun{v^-m\fnipun} ,
\end{equation}

\underline{\it Step 2:} discrete velocity grid $\Vnpuni$ (step
unchanged).
\begin{equation}  \label{eq-boundsnpunibar}
\vminnpuni=\unpuni-4\sqrt{R\Tnpuni} \quad \text{ and } \quad 
\vmaxnpuni=\unpuni+4\sqrt{R\Tnpuni}.
\end{equation}
\begin{equation}\label{eq-Vnpunibar} 
\begin{split}
 \Vnpuni=\lbrace\vnpunik & =  \vminnpuni+(k-1)\Dvnpuni, 
\quad
k \text{ from } 1 \text{ to } \K  \rbrace,\\
& \text{ with }\Dvnpuni=(\vmaxnpuni-\vminnpuni)/(\K-1). 
\end{split}
\end{equation}

\bigskip
\underline{\it Step 3:} distribution function at time $t_{n+1}$ (step
unchanged).
\begin{equation}  \label{eq-fnpunikbar}
\begin{split}
  \fnpunik=\fnibar(\vnpunik) & -
  \frac{\Dt}{\Dx}{\vnpunik}^+(\fnibar(\vnpunik)-\fnimunbar(\vnpunik))
  - \frac{\Dt}{\Dx}{\vnpunik}^-(\fnipunbar(\vnpunik)-\fnibar(\vnpunik))  \\
  & + \frac{\Dt}{\taunpuni}(M_k(\Unpuni)-\fnpunik),
\end{split}
\end{equation}
for $k=1$ to $\K$. \\

\underline{\it Step 4:} Moment correction step.
\begin{equation}  \label{eq-Unpunibarmod}
  \Unpunibar=\cintnpuni{m\fnpuni}=\sum_{k=1}^\K m(\vnpunik) \fnpunik \omega_k.
\end{equation}

This means that the macroscopic quantities at time $t_{n+1}$ are modified to be the
moments of $\fnpuni$, and that the discrete conservation laws at the
next time step are initialized with these moments. This is similar to
a technique used in the ``moment guided'' method proposed in~\cite{DDP_2011}. 

For this modified scheme, the non-negativeness property~\ref{prop:pos} is still true, but
unfortunately, the conservation property~\ref{prop:cons} is lost: 
\modif{Indeed, even if we deduce from the discrete conservation laws~(\ref{eq-Unpunibarbar}) that 
\begin{equation*}
  \sum_iU^{n+1}_i \Delta x  = \sum_i U_i^{n,\star}\Delta x,
\end{equation*}
there is no way to link the corrected moment $U_i^{n,\star}$ to the
moment vector $U_i^{n}$ obtained at the previous time step, for the
same reason as given at the beginning of this section. Indeed, it is
likely that $U_i^{n,\star}$ is different from $U_i^{n}$, even if they
approximate the same value. Consequently, the scheme is not
conservative anymore.  }

\label{review_page1}Finally, we conclude this section by another modification that ensures
the positivity of the temperature $\Tnpuni$ in the previous modified
scheme. We propose to replace the quadratures used to compute the
macroscopic vector $\Unibar$ and the numerical flux $\Phi^n_{i+\demi}$
(see~(\ref{eq-Unpunibarmod}) and~(\ref{eq-numfluxbar})) by the exact integral of
the corresponding reconstructed functions, that is to say:
\begin{itemize}
  \item in step 1, (\ref{eq-numfluxbar}) is replaced by 
\begin{equation*}
  \Phi^n_{i+\demi}=\cint{v^+m\fnibar} + \cint{v^-m\fnipunbar},
\end{equation*}
\item in step 4, (\ref{eq-Unpunibarmod})  is replaced by
\begin{equation*}
   \Unibar=\cint{m\fnibar},
\end{equation*}
\end{itemize}
where we remind that $\cint{\phi}=\int_\R \phi(v)\, dv$ for
any velocity dependent function.
If the reconstruction procedure uses a polynomial interpolation, these
integrals are just integrals of piecewise polynomial functions
and can be evaluated explicitly. 
With this definition, the discrete conservation
law~(\ref{eq-Unpunibarbar}) reads
\begin{equation*}
  \Unpuni=\cint{m\Bigl( (1-\frac{\Dt}{\Dx}|v|)\fnibar +
    \frac{\Dt}{\Dx}v^+\fnimunbar - \frac{\Dt}{\Dx}v^-\fnipunbar
    \Bigr)} = \cint{m\phi}
\end{equation*}
where $\phi$ is a piecewise continuous function of $v$. Now we have the
following property.
\begin{prop} \label{prop:Urea}
  Under the CFL condition $\Dt\leq
  \Dx/\max_{i,k}(|\vnik|)$, the function $\phi$ is non negative, and
  hence $\Tnpuni$ is positive.
\end{prop}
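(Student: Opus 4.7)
The plan is to show pointwise non-negativity of $\phi$ first, and then derive positivity of $\Tnpuni$ by a Cauchy--Schwarz argument applied to the integral $\Unpuni=\cint{m\phi}$.

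For the first part, I would rewrite $\phi$ as
$$\phi(v) = \Bigl(1-\tfrac{\Dt}{\Dx}|v|\Bigr)\fnibar(v) + \tfrac{\Dt}{\Dx}v^+ \fnimunbar(v) + \tfrac{\Dt}{\Dx}(-v^-)\fnipunbar(v),$$
and check the sign of each coefficient. Since $v^+\geq 0$ and $-v^-\geq 0$, the last two terms are non-negative as soon as the reconstructions $\fnimunbar$ and $\fnipunbar$ are non-negative, which we assume (as in Proposition~\ref{prop:pos}). The coefficient $1-\frac{\Dt}{\Dx}|v|$ of $\fnibar(v)$ can a priori be negative for large $|v|$: this is the main obstacle, and the key observation is that by definition~(\ref{eq-deffnibar}) we have $\fnibar(v)=0$ outside $[\vminni,\vmaxni]$. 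Consequently the contribution of $\fnibar(v)$ to $\phi$ is nonzero only for $v\in[\vminni,\vmaxni]$, on which $|v|\leq\max_k|\vnik|\leq\max_{i,k}|\vnik|$. The CFL condition then ensures $1-\frac{\Dt}{\Dx}|v|\geq 0$ on that support, so the first term is also non-negative. Combining the three contributions yields $\phi(v)\geq 0$ for every $v$.

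For the second part, I would exploit the fact that $\Unpuni=\cint{m\phi}$ with $m(v)=(1,v,\tdemi|v|^2)$, so that
$$\rho^{n+1}_i=\cint{\phi},\qquad \rho^{n+1}_i\unpuni=\cint{v\phi},\qquad E^{n+1}_i=\tdemi\cint{|v|^2\phi}.$$
Since $\phi\geq 0$, $\rho^{n+1}_i\geq 0$. From the definition $E=\tdemi\rho|u|^2+\tdemi\rho R T$ we get
$$\rho^{n+1}_i R\Tnpuni = \cint{|v|^2\phi} - \frac{(\cint{v\phi})^2}{\cint{\phi}},$$
and the right-hand side is non-negative by the Cauchy--Schwarz inequality applied to the non-negative measure $\phi(v)\,dv$. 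Strict positivity follows as long as $\phi$ is not a Dirac mass, which is guaranteed here because $\phi$ is a piecewise continuous function supported on a nondegenerate interval whenever the data are not identically zero. This gives $\Tnpuni>0$ and completes the proof.

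I expect the only subtle point to be the first one — the observation that the potentially bad coefficient $1-\frac{\Dt}{\Dx}|v|$ only multiplies $\fnibar$, whose compact support $[\vminni,\vmaxni]$ is precisely tailored so that the CFL condition $\Dt\leq\Dx/\max_{i,k}|\vnik|$ saves the sign. The rest is a routine application of Cauchy--Schwarz. No other assumptions beyond the non-negativity-preserving reconstruction (already stipulated in Proposition~\ref{prop:pos}) are needed.
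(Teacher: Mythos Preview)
Your proof is correct and follows essentially the same approach as the paper: decompose $\phi$ as a linear combination of the three reconstructed distributions, observe that the coefficients of $\fnimunbar$ and $\fnipunbar$ are manifestly non-negative, and handle the potentially bad coefficient $1-\tfrac{\Dt}{\Dx}|v|$ by noting that $\fnibar$ vanishes outside $[\vminni,\vmaxni]$, where the CFL condition guarantees the sign. The only difference is that the paper dismisses the positivity of $\Tnpuni$ as ``a standard result'' once $\Unpuni$ is realized by a non-negative distribution, whereas you spell out the Cauchy--Schwarz argument explicitly; this is merely a matter of detail, not of strategy.
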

\begin{proof}
  Observe that $\phi$ is a linear combination of $\fnibar$, $\fnimunbar$,
  $\fnipunbar$. The last two coefficients are always non negative. As
  a consequence of the CFL condition, the first coefficient, which is
  $1-\frac{\Dt}{\Dx}|v|$, is non negative if $|v|$ is small enough (that
  is to say $|v|\leq \max_{i,k}(|\vnik|)$). If $v$ is larger, the
  coefficient is negative, but by construction
  $\fnibar(v)=0$ (see~(\ref{eq-deffnibar})). Consequently, $\phi$ is non negative for every $v$, and
  hence $\Unpuni$ is realized by a non negative distribution. It is
  then a standard result that the corresponding density, energy and
  temperature are positive.
\end{proof}

However, we observe that in practice, the first modified
scheme~(\ref{eq-Unpunibarbar}--\ref{eq-Unpunibarmod}) preserves the
positivity. This is why we do not use this second modified scheme in
the numerical tests presented in this paper.

   \subsection{Reconstruction: from $\fnik$ to $\fnibar$}
 \label{interpol}

 To compute $\fnibar
  (v_{i,k}^{n+1})$ in equation (\ref{eq-fnpunik}), we have to use a
  reconstruction procedure. First, if $\vnpunik$ is external to $\Vni$, we set
  $\fnibar(\vnpunik)=0$:\label{review-page5} indeed, if the grid $\Vni$ is large
    enough, the distribution is very very small outside the grid, and
    it is reasonable to set it to 0. If $\vnpunik$ is inside $\Vni$, it is not a node of
  $\Vni$ in general, and we use a piecewise
  polynomial interpolation. We observed that first order polynomial
  interpolation is not accurate enough (a lot of discrete
    velocities are needed to get correct results). However, higher order
  interpolation produces oscillations, especially in very rarefied
  regimes, which is probably due to the large velocity gradients (and
  even discontinuities) of the distribution functions in such regimes.

Consequently, we use the essentially non oscillatory (ENO)
interpolation (see~\cite{shu_rapport_eno} or~\cite{FST2013}): with 3 or 4
point interpolation, the accuracy is good and there is almost no
oscillation.

The reconstruction algorithm is summarized below.
\begin{enumerate}
  \item If $|\vnpunik|>\max(|\vminni|,|\vmaxni|)$, then set
    $\fnibar(\vnpunik)=0$
  \item else 
\begin{enumerate}
  \item find the interval $[v^n_{i,k'},v^n_{i,k'+1}]$ inside $\Vni$
    that contains $\vnpunik$
  \item compute the $q$ point ENO polynomial function $P$ defined on the
    stencil $\lbrace v^n_{i,k'-(q-1)},v^n_{i,k'+q}\rbrace$
  \item set $\fnibar(\vnpunik)=P(\vnpunik)$
\end{enumerate}
\end{enumerate}

Of course, the same procedure is applied to the other reconstructed
values $\fnipunbar(\vnpunik)$ and $\fnimunbar(\vnpunik)$ in~(\ref{eq-fnpunik}).

\label{review-page6}\begin{remark}
  If we use a first order interpolation (linear reconstruction), the
positivity of the distribution function is preserved, this can be
proved easily. But, for the higher order ENO interpolation that we use
in practice, there is no reason why the positivity should always be
preserved. It could be interesting to look for modified interpolations
that preserve positivity. However in all the test cases that have
been studied here, this drawback does not induces a loss of positivity
of $\fnpuni$ in~(\ref{eq-fnpunik}).
\end{remark}

   \subsection{Non symmetric local discrete velocity grids}
  \label{subsec:nonsymmetric}

When the flow is far from equilibrium, the distribution functions are
different from their local Maxwellian, and might have a non symmetric
shape. In particular, their support might be non symmetric as well
(see for instance the heat transfer problem in the rarefied regime as
shown in section~\ref{subsec:heat}). However, the local grids defined
in section~\ref{subsec:ldv} are based on the local Maxwellians and are
necessarily symmetric. In this section, we propose a method to modify
the grid if necessary. \modif{This method can be applied to both versions
~(\ref{eq-Unpunibar}--\ref{eq-fnpunik})
and~(\ref{eq-Unpunibarbar}--\ref{eq-Unpunibarmod}) of our scheme.}

First, note that up to now, we have defined uniform local grids with a
constant number of points. However, the method is readily extended to
non uniform grids with a variable number of points : we just have to
replace $K$ by $K^n_i$ ad $\omega_k$ by $\omega^n_{i,k}$ in every
expressions of section~\ref{subsec:ldv}.

Then, the idea is to enlarge the grid $\Vnpuni$ if $\fnpunik$ is not
small enough at its boundaries. This is made by using a splitting
between the relaxation step and the transport step: we first compute
the intermediate quantity $ f_{i,k}^{n+1/2}$ by using the transport
equation as
\begin{equation}  \label{eq-transport}
\begin{split}
   f_{i,k}^{n+ 1/2} =\fnibar(\vnpunik)  - \frac{\Dt}{\Dx}{\vnpunik}^+(\fnibar(\vnpunik)-\fnimunbar(\vnpunik)) 
- \frac{\Dt}{\Dx}{\vnpunik}^-(\fnipunbar(\vnpunik)-\fnibar(\vnpunik)) ,
\end{split}
\end{equation}
for $k=1$ to $\K$. At the end of the transport step, the values of the
distribution function $f_{i}^{n+ 1/2}$ at the boundary points
$v^{n+1}_{i,max}$ and $v^{n+1}_{i,min}$ of $\Vnpuni$ are compared to
the maximum value of the distribution in the
grid: \label{review-page7} if the relative difference between
  one of these boundary values and the maximum value in the grid is
  larger than a tolerance (that was taken to $10^{-4}$ in our tests),
  then new grid points $w_R=v^{n+1}_{i,max}+\Dvnpuni$ or
  $w_L=v^{n+1}_{i,min}-\Dvnpuni$ are added outside the grid, and the
  corresponding values of $f_{i,k}^{n+ 1/2}$ are computed by
  using~(\ref{eq-transport}) again. This step is repeated until the
  left and right relative differences are smaller than the specified
  tolerance. At the end of this step, the modified grid $\Vnpuni$ now
has $K^{n+1}_i$ velocities. Finally, $ f_{i,k}^{n+1}$ is obtained from
the relaxation step through the relation
\begin{equation}  \label{eq-relax}
\begin{split}
  \fnpunik=   f_{i,k}^{n+ 1/2}     + \frac{\Dt}{\taunpuni}(M_k(\Unpuni)-\fnpunik),
\end{split}
\end{equation}
for $k=1$ to $K^{n+1}_i$. 

\modif{Note that that the method suggested here is just a modification
  ot the previous schemes. For
  scheme~(\ref{eq-Unpunibar}--\ref{eq-fnpunik}), the discrete kinetic
  equation~(\ref{eq-fnpunik}) is replaced by the transport/relaxation splitting
  (\ref{eq-transport}--\ref{eq-relax}). After the use of the transport step~(\ref{eq-transport}) on ${\cal
    V}_i^{n+1}$, it is used iteratively to add new points outside the
  grid, until the distribution is small enough, which leads to an
  ``enlarged'' grid, still denoted by ${\cal V}_i^{n+1}$. Then, the
  relaxation step~(\ref{eq-relax}) is
  used. Scheme~(\ref{eq-Unpunibarbar}--\ref{eq-Unpunibarmod}) can be
  modified accordingly.}

It would also be interesting to use an automatic refinement
of the local grid around the possible discontinuities, but this is not
studied in this paper. 

   \subsection{Extensions to other collision models}
   \label{subsec:esbgk}
This algorithm can be adapted to any collision model which is local in
space, like the ES-BGK or Shakhov models, or even the Boltzmann
collision operator itself. Indeed, as long as the grids are defined by
using the velocity and the temperature, we only have to use the
conservation laws (density, momentum, and energy), that are satisfied
by all the standard models. The fact that a model like ES-BGK contains
non-conservative quantities has no influence on the algorithm.

This is slightly different if one wants to use higher order moments to
define the local velocity grids. Indeed, we could imagine that the
shear stress and the heat flux, for instance, could also be used to define non
isotropic and non symmetric local velocity grids, even if this is not
what we advocate now. In that case, we would have to use
moment equations that are not conservation laws (evolution of the
pressure tensor and the energy flux). Then, the applicability of our
algorithm depends on the time approximation of the collision
operator. If we use an explicit time discretization, the right-hand
side of the higher-order moments equations can be explicitly computed
with the distributions at time $t_n$. However, if we use a
semi-implicit time discretization (as we do in this paper), the
discrete moment equations can be solved only if the right-hand side can
be written as a function of the moment vector at time $t_{n+1}$: this
is true for relaxation models like BGK er ES-BGK and Shakhov models,
but this is not true in general (for the Boltzmann collision operator
for instance).

However, note that this discussion makes sense only when the velocity
is at least of dimension 2 (since ES-BGK model and Boltzmann collision
operator do not exist for a one dimensional velocity), which will be
studied in a forthcoming work.

   \section{Numerical results}
\label{sec:res}

In this section we present three numerical tests to illustrate the
main features of our method (denoted by LDV, for local discrete
velocity grid). It is compared to a standard discrete velocity method
(with a global velocity grid) denoted by DVM
(see~\cite{luc_m3as}). First, the numerical scheme is tested on the
Sod test case for three different regimes: the rarefied, fluid, and
free transport regimes. The second test is the two interacting blast
waves problem in which very high temperature differences make the
standard DVM inefficient. The third test case is devoted to the heat
transfer problem, where the use of non symmetric local grids is shown
to be necessary when the Knudsen number increases. In these tests, the
gas constant $R$ is 208.1, except for the free transport regime in
section~\ref{transpfree} where $R=1$.

   \subsection{Sod test case}
   \label{subsec:sod}

\subsubsection{Rarefied regime}

\indent We consider a classical Sod test in a rarefied regime for the
BGK model (\ref{eq-bgk}) with the parameters $\omega = -0.19$ and $C =
1.08\, \cdot 10^{-9}$ used in the relaxation time $\tau$. The space
domain is the interval $[0,0.6]$ discretized with $300$ points. The
initial state is given by a local Maxwellian distribution whose
macroscopic quantities are
\begin{eqnarray} \label{sodinit}
\begin{split}
 & T(x) = 0.00480208, \; \rho(x) = 0.0001, \; u(x) =0 \; \mbox{for} \; x
\in [0,0.3] 
\\ & T(x) =
  0.00384167, \; \rho(x) = 0.0000125, \; u(x) =0 \;  \mbox{for} \; x
  \in ]0.3,0.6].
\end{split}
\end{eqnarray}

The DVM and LDV
 methods are compared to a reference solution given by the DVM method
 computed for a large and fine velocity grid (obtained after a
 convergence study). This
  velocity grid
  has $600$ points uniformly distributed in the interval $[ v_{min} , v_{max}]$ where
\begin{equation} \label{enlarged}
v_{min} =
  \min_{t,x}\bigl(u(t,x)-6\sqrt{RT(t,x)}\bigr), \hspace*{4 mm} v_{max} =
  \max_{t,x}\bigl(u(t,x)+6\sqrt{RT(t,x)}\bigr),
\end{equation}
which leads to bounds equal to $\pm 6$. 
Of course, such bounds cannot be determined a priori, and several
computations with larger and larger velocity grids have to be done
before the correct bounds are found. This illustrates the difficulty
to use a standard DVM when the extreme values of the velocity and the
temperature are not known a priori. Indeed, here, the temperature in
the shock after the initial time is higher than the two initial left
and right temperatures. If the velocity grid is computed with
formula~(\ref{eq-gbound}) and~(\ref{eq-gstep}) and the bounds are
estimated with the initial values of $T$ and $u$ 
(which gives bounds
equal to $\pm 4$)
, then the grid is not large
enough:  the results are not correct,  even if the number of
velocities is increased so as to reach the grid convergence, which is
obtained here with 100 velocities. This is shown in
figure~\ref{fig:sod_rarefied}.

At the contrary, our LDV method dynamically adapts to the time
variations of $u$ and $T$ and gives very accurate results with 30
discrete velocities only in each local grid, as it is shown in
figure~\ref{fig:sod_rarefied}. Consequently, the LDV
method is very efficient for this case. 

Note that in this test, the reconstruction procedure used the 4-points
ENO interpolation. See section~\ref{transpfree} for an analysis of the
influence of the order of interpolation.

\subsubsection{Fluid regime}
\label{subsubsec:fluid}

Now, we consider the same Sod test case, but in the fluid regime. This
regime corresponds to the limit case of equation (\ref{eq-bgk}) when
$\tau$ is set to $0$. Note that since both DVM and LDV methods are
used with a time semi-implicit scheme, taking $\tau=0$ means that
$f^{n+1}$ is set to $M(U^{n+1})$ at each time step (and hence the
choice of the interpolation procedure has no influence), and we get
two different numerical schemes for the compressible Euler equations
of gas dynamics.

Here, the reference DVM has 100 velocities only 
and bounds equal to $\pm 5.2$
. The DVM with the grid
computed with the initial values of $T$ and $u$ has also 100
velocities
with bounds equal to $\pm 4$
, but it still gives incorrect values (see
figure~\ref{fig:sod_fluid}), for the same reasons as mentioned in the
previous section. At the contrary, our LDV method gives very accurate
results with 10 velocities only. 

\label{review-page3} Note that the results obtained in this fluid
  regime are very close to those obtained in the rarefied regime (even
  if the shock profile is stiffer in the fluid regime, as
  expected). This can be understood by computing the Knudsen number of
  the rarefied regime. In this test, the initial mean free path is
  between $3 \, 10^{-5}$ (left state) and $2 \, 10 ^{-4}$ (right
  state). It is difficult to define a Knudsen number here, since there
  is no macroscopic length scale, but if we choose the length of the
  computational domain, we find a Knudsen number between $5 \,
  10^{-5}$ and $3\, 10^{-4}$, which is quite small.

\subsubsection{Free transport regime} 
\label{transpfree} 

We consider the free transport regime corresponding to the limit case
in (\ref{eq-bgk}) when $\tau$ tends to $+ \infty$. In this section,
we take $R=1$ and the standard dimensionless values for the Sod test
case. The space domain is $[-1,1]$ and is discretized with $300$
points. The distribution function is initialized by the local
Maxwellian distribution function with
\begin{eqnarray} \label{transpinit}
\begin{split} 
\rho=1, \;   u=0, \;   p=1 \quad  \mbox{in } [-1,0[, 
 \\   
\rho=0.125, \;  u=0 , \;  p=0.1  \quad  \mbox{in }  [0,1].
\end{split}
\end{eqnarray}
For this test case the numerical results are compared to the
analytical solution of the free transport equation. 

It is very difficult to accurately approximate the free transport
equation with a standard DVM: the macroscopic profiles obtained with
the DVM show several plateaux. These plateaux are not due to the space
and time approximation, but are only due to the velocity
discretization. Indeed, it can be easily proved that the macroscopic
profiles of the DVM solution at time $t$ have plateaux of length
$t\Dv$, where $\Dv$ is the step of the global uniform grid. This is
clearly seen in figures~\ref{freesodtemperature}--\ref{freesoddensite}
(top), where the DVM has 30 discrete velocities
with bounds equal to $\pm 4$
. This phenomenon is
known as the ray effect that appears with the discrete ordinate
approximations of the radiative transfer equation.

When we test this problem with our LDV method, with 2 point piecewise linear
interpolation, and 30 velocities in each local grids, the results are very
bad: we observe very large oscillations (see
figures~\ref{freesodtemperature}--\ref{freesoddensite}, bottom). This is
probably due to the fact that this interpolation is not accurate
enough. Then we use 3 and 4 point ENO interpolations, and we observe in
the same figures that the solution is now much closer to the exact
solution. Moreover, while we have the same number of discrete
velocities in each local grid as in the DVM, we observe that the
plateaux are completely eliminated.

However, there are still some oscillations in the results obtained
with the LDV \modif{(note that these oscillations are not
amplified and that the numerical solution remains bounded for larger
times)}. The oscillations are probably due to the fact that these results were
obtained without the moment correction method. Indeed, if we do now
the same simulation with this moment correction method
(scheme~(\ref{eq-Unpunibarbar}--\ref{eq-Unpunibarmod})), the
results are very good: there are much less oscillations, almost no
plateau phenomenon, and the results are much closer to the analytical
solution, see
figures~\ref{fig:sod_step4_temperature}--\ref{fig:sod_step4_density}.

\label{review-page2} Up to now, we do not know the reason for these oscillations and
  it is not clear why they are eliminated when we use the moment
  correction method. Our intuition is that in the original method, there
  is some incompatibility between the discretization of the
  conservation laws and the discretization of the kinetic equation:
  namely, the moments of the discrete kinetic equation do not lead to
  the discrete conservation laws that are solved. This is why the
  discrete conservation laws and the discrete kinetic equation lead to
  two different approximations of the moment vector at time
  $t_{n+1}$. The moment correction method forces these quantities to
  be equal.

   \subsection{Two interacting blast waves}
   \label{subsec:blast}

   This section is devoted to the test case called ``the two
   interacting blast waves'' (see~\cite{woodward}). Here, the
   relaxation time is defined with $\omega = -0.19$ and $C = 1.08\,
   10^{-9}$. The space domain is the interval $[0,1]$ which is still
   discretized with $300$ points. The initial distribution function is
   a local Maxwellian distribution whose macroscopic quantities are
   given by $\rho  =1$ and $u =0$ everywhere, and 
\begin{equation*}
 \; T =4.8, \; \mbox{in}  \, [0, 0.1], \quad T = 4.8
\, 10^{-5} \; \mbox{in} \; ]0.1 , 0.9] , \quad  T =4.8 \, 10^{-1} \;
\mbox{in} \; ]0.9,1].
\end{equation*}
On the left and right boundaries, we use Neumann boundary conditions:
\label{review-page9} we set the values of $f$, $u$, and $T$ in boundary ghost-cells to
  their values in the corresponding real boundary cells.

Here, the bounds of the global grid of the DVM are determined by the
largest initial temperature (we get $\pm126.5$), and its
step size is given by the smallest initial temperature. Then we find
that the coarsest global grid that satisfies
conditions~(\ref{eq-gbound})-(\ref{eq-gstep}) has not less than $2\,
551$ velocities! In
figures~\ref{temperatureblast}--\ref{pressionblast}, we observe that
the LDV method requires only $30$ velocities to give results that are
very close to the DVM method (with $2\, 551$ velocities), both before
and after the shock. This proves the high efficiency of the LDV
approach for this case. Note that here again, we use a 4 point ENO
interpolation in the LDV method.

Finally, we plot in figure~\ref{fig:blast_grids} some local velocity grids
for different space positions: in these plots, each vertical line is a
local velocity grid, and its nodes are the small dots on the
line. Note that before the waves interaction
(top), the velocity grids in the middle of the domain are much smaller
than the grid in the left state, which is due to the different
order of magnitude of the temperature at this time. After the
interaction, the temperature is more homogeneous, and the grids as well
(see the bottom plot in this figure).

   \subsection{Heat transfer problem}
    \label{subsec:heat}
In this test, we consider the evolution of a gas enclosed between two
walls kept at temperature $T_L=300$ and $T_R=1000$. At these walls, the
distribution function satisfies the diffuse boundary condition
\begin{eqnarray} \label{maxwelltransfert}
f(x=0,v>0 ) = M_{\rho_L,0,T_L} , \hspace*{3 mm} 
f(x=1,v<0 ) = M_{\rho_R,0,T_R}
\end{eqnarray}
where
\begin{eqnarray} \label{sigma}
 \rho_L = - \frac{\int_{v <0} v f(x=0,v) dv}{\int_{v >0} v
   M_{1,0,T_L} dv}, \hspace*{3 mm}
  \rho_R = - \frac{\int_{v >0} v f(x=1,v) dv}{\int_{v <0} v
   M_{1,0,T_R} dv},
\end{eqnarray}
and $M_{\rho,0,T}$ denotes $\rho/\sqrt{2\pi RT}\exp(-v^2/2RT)$ for
every $\rho$ and $T$. The initial data is the Maxwellian with density
$\rho_0$ (to be defined later), velocity $u_0=0$, and
$T_0=300$. Here, the relaxation time is defined with $\omega =
-0.5$ and $c = 6.15\, \cdot 10^{-9}$. 

The boundary conditions are taken into account in our numerical scheme
by a ghost cell technique, as it is standard in finite volume
schemes. Left and right ghost cells are defined for $i=0$ and
$i=i_{max}+1$, and the velocity grids ${\cal V}^n_0$ and ${\cal V}^n_{i_{max}+1}$
in these cells are defined so as to correctly describe the
corresponding wall Maxwellians. Then, the density $\rho_L$ and
$\rho_R$, that are defined as the ratio of an outgoing mass flux at
a wall to the corresponding incoming Maxwellian mass flux, are
approximated by using the boundary cell and the corresponding ghost
cell, that is to say: 
\begin{eqnarray*}
\rho_L= - \frac{ \langle v^- f_1^n \rangle_{ {\cal V}^n_{1} }
}{ \langle v^+ M(1,0,T_L)
\rangle_{{\cal V}^n_{0} }}, \hspace*{3 mm}
  \rho_R = - \frac{ \langle v^+ f_{i_{max}}^n \rangle_{{\cal
        V}^n_{i_{max}}}}{ \langle  v^- M(1,0,T_R) \rangle_{{\cal V}^n_{i_{max}+1} } } .
\end{eqnarray*}

\modif{For this test, we use the moment correction method with fourth
  order ENO interpolation, except for the computations discussed at the end of
  this section (see remark~\ref{rem:P1} below).}  We also use several
Knudsen numbers $\Kn$ here.  This number is parametrized by the
initial density $\rho_0$. We first analyze the LDV method in the
transitional regime ($\rho_0=1.88862\ 10^{-5}$, which gives
$\Kn=10^{-2}$): here, both the LDV and DVM are converged with 30
velocities (the bounds of the global grid are $\pm1825.34$), while the
space domain $[0,1]$ is discretized with $1000$ points.  However, the
results obtained with the LDV are not accurate enough (see
figure~\ref{fig:heat_trans}). An analysis of this problem shows that
this is due to the local grid close to the right boundary which is not
large enough: for small times, the distributions at these points are
highly non symmetric.

To correct this problem, we use the algorithm proposed in
section~\ref{subsec:nonsymmetric} to enlarge the local grids in a non
symmetric way. Then, the LDV with 30 velocities now gives results that
are indistinguishable from the DVM, see figure~\ref{transfertelargievitesse_etape4}. 

Then, we test the LDV method in the rarefied regime ($\Kn=1$,
$\rho_0=1.88862\ 10^{-7}$): the
LDV (with enlarged non symmetric local grids) and the DVM are converged with 300
velocities, while the space domain $[0,1]$ is discretized with $300$
points. Here again, both methods give results that are almost
indistinguishable (see figure~\ref{fig:heat_kn0}). Unfortunately, the
number of velocities required to get converged results is very large
here, even for the LDV (300 velocities). This is probably due to the
fact that the distribution function is discontinuous with respect to
the velocity in this test, with very large jumps: our velocity grids,
even the local ones, are uniform, and cannot capture these
discontinuities when the number of velocities is too small. However, we
show in figure~\ref{fig:heat_kn0_50} the results obtained with 50
velocities only, and we observe that the LDV gives results that are
more accurate than the DVM. 

\modif{Finally, we did the same computations for $\Kn=10$
  ($\rho_0=1.88862\ 10^{-8}$) and $\Kn=1000$ ($\rho_0 = 1.88862\
  10^{-10}$) and we observed the following. First, the plateau
  phenomenon in the DVM approach with 100 velocities is clearly seen
  with Kn$=10$ (while it was only slightly visible for $\Kn=1$). For
  $\Kn=1000$, the results are almost the same.  For the LDV, when the
  number of discrete velocities is the same as for the DVM, the
  results are close to the reference solution (no plateau, no
  oscillations) for $\Kn=10$ and $\Kn=1000$, which is much better than
  with the DVM (see
  figures~\ref{fig:heat_kn10} and~\ref{fig:heat_kn1000}). When the number of
  velocities is not large enough (30 points tested here), both method
  give wrong results, even if the LDV is better.}

 \begin{remark}\label{rem:P1}

\modif{We also compare our LDV method with and without the moment correction
step, for P1 and fourth order ENO interpolation. We do not find it usefull to add
the corresponding curves in this paper, but our observations are
summarized below:
\begin{itemize}

  \item for $\Kn=0.01$, there is not much
difference if the moment correction method is used or not:
\begin{itemize}
  \item with fourth order ENO interpolation, both methods
give good results, even if we note a solution which is slightly less
smooth without the moment correction step (there are a
few small peaks). 
\item if the P1 interpolation is used, both methods give a wrong
  solution (with small oscillations without the moment correction method). 
\end{itemize}

\item for larger Kn, ($\Kn=1, 10, 1000$): 
\begin{itemize}
  \item if the moment correction method is
not used, we observe oscillations whose amplitude increases with Kn,
regardless of the interpolation.
\item with the moment correction method and the P1 interpolation, the
  difference between the numerical results and the reference solution
  increases with Kn, except if the number of discrete velocities is
  increased too.
\end{itemize}
\end{itemize}
}

 \end{remark}

   \subsection{CPU time comparisons}
   We have compared the CPU cost of our method (on a single processor
   Pentium(R) Dual-Core CPU E6500\@2.9GHz) to the standard DVM with a
   global grid, by using the fortran subroutine {\tt cpu\_time}, and the following test cases:
\begin{itemize}
\item Sod test: we compared the DVM with 30 discrete velocities (shown
  in figures~\ref{freesodtemperature}--\ref{freesoddensite} (top) to
  our LDV with 30 points, 4 points ENO interpolation, and the moment
  correction method (shown in
  figures~\ref{fig:sod_step4_temperature}--\ref{fig:sod_step4_density});
 \item Blast waves: test case shown in
   figures~\ref{temperatureblast}--\ref{pressionblast} (bottom);
 \item Heat transfer: comparison between the LDV with 50 velocities
   and non symmetric grids, to the DVM with 50 velocities (see
   figure~\ref{fig:heat_kn0_50}).
\end{itemize}

We observed (see table~\ref{table:cpu}) that our method with local
velocity grids is more expensive than the global grid method for Sod
et heat transfer tests. Since the number of velocities in the grids is
smaller with our method, this increase in CPU cost is probably due to
the very large number of interpolations made in the evaluation of
reconstructed distributions. At the contrary, for the blast wave
problem, the number of velocities is so large in the global grid that
our method is less expensive. However, as expected, the gain factor in
CPU time (which is 45) is smaller than the gain factor in number of
velocities (which is 85).

We point out that the implementation of our method has not been
optimized in this work, and such an optimization would probably make
the method faster. However, these comparisons show that our algorithms
have to be improved to be less computationally expensive, in
particular to reduce the cost of the reconstructions of the
distribution in their local grids. This is now investigated in a forthcoming
work for 2D problems.

\section{Conclusion and perspectives}

We have presented a new velocity discretization of kinetic equations
of rarefied gas dynamics: in this method, the distribution functions
are discretized with velocity grids that are local in time and space,
contrary to standard discrete velocity or discrete ordinate
methods. The local grids dynamically adapt to time and space
variations of the support of the distribution function, by using the
conservation laws. 

This method is very efficient in case of strong variations of the
temperature, for which a standard discrete velocity method requires a
very large number of velocities. Moreover, it allows to eliminate the
plateau phenomenon in very rarefied regimes. 

\label{review-page4} We mention that in this study, the space
  discretization is a simple first order upwind method, which is known
  to have a very low accuracy. However, our method is quite
  independent of the space approximation: any higher order finite
  volume or finite difference approximation could be used. For
  instance, a second order upwind scheme with limiters can be used
  very easily by adding a flux limiter in (8) and slope limiters in
  (12).  However, in this preliminary work, we find it simpler to use
  a first order scheme to analyze the properties of the method, and to
  compare its advantages and drawbacks. We defer the investigation of
  higher order schemes to a work in progress in which our method will
  be extended to multi-dimensional problems and its computational cost
  will be reduced. 

Moreover, several aspects of the method should be better understood, in
particular, why are there some oscillations if the moment correction
method is not used, in the rarefied and free transport regimes, even
with high order ENO interpolation? A mathematical analysis of the
numerical method could be interesting here.

\paragraph{ Acknowledgements.} 
 This study has been carried out in the frame of “the Investments for the future” Programme IdEx Bordeaux – CPU (ANR-10-IDEX-03-02).

 \bibliographystyle{plain}

 \bibliography{biblio}

\begin{thebibliography}{10}

\bibitem{Aristov77}
V.V. Aristov.
\newblock Method of adaptative meshes in velocity space for the intense shock
  wave problem.
\newblock {\em USSR J. Comput. Math. Math. Phys.}, 17(4):261--267, 1977.

\bibitem{BCHM_saragosse}
C.~Baranger, J.~Claudel, N.~Herouard, and L.~Mieussens.
\newblock Locally refined discrete velocity grids for deterministic rarefied
  flow simulations.
\newblock {\em AIP Conference Proceedings}, 1501(1):389--396, 2012.

\bibitem{BCHM_2012}
C.~Baranger, J.~Claudel, N.~Hérouard, and L.~Mieussens.
\newblock Locally refined discrete velocity grids for stationary rarefied flow
  simulations.
\newblock {\em Journal of Computational Physics}, 257, Part A(0):572 -- 593,
  2014.

\bibitem{BLM_2008}
Mounir Bennoune, Mohammed Lemou, and Luc Mieussens.
\newblock Uniformly stable numerical schemes for the boltzmann equation
  preserving the compressible navier–stokes asymptotics.
\newblock {\em Journal of Computational Physics}, 227(8):3781 -- 3803, 2008.

\bibitem{bird}
G.A. Bird.
\newblock {\em {M}olecular {G}as {D}ynamics and the {D}irect {S}imulation of
  {G}as {F}lows}.
\newblock Oxford Science Publications, 1994.

\bibitem{BR_1999}
A.~V. Bobylev and S.~Rjasanow.
\newblock Fast deterministic method of solving the {B}oltzmann equation for
  hard spheres.
\newblock {\em Eur. J. Mech. B Fluids}, 18(5):869--887, 1999.

\bibitem{BPS}
A.V. Bobylev, A.~Palczewski, and J.~Schneider.
\newblock {A} {C}onsistency {R}esult for a {D}iscrete-{V}elocity {M}odel of the
  {B}oltzmann {E}quation.
\newblock {\em Siam J. Numer. Anal.}, 34(5):1865--1883, 1997.

\bibitem{buet_ttsp}
C.~Buet.
\newblock {A} {D}iscrete-{V}elocity {S}cheme for the {B}oltzmann {O}perator of
  {R}arefied {G}as {D}ynamics.
\newblock {\em Transp. Th. Stat. Phys.}, 25(1):33--60, 1996.

\bibitem{CXLC2012}
S.~Chen, K.~Xu, C.~Lee, and Q.~Cai.
\newblock A unified gas kinetic scheme with moving mesh and velocity space
  adaptation.
\newblock {\em Journal of Computational Physics}, 231(20):6643 -- 6664, 2012.

\bibitem{DDP_2011}
Pierre Degond, Giacomo Dimarco, and Lorenzo Pareschi.
\newblock The moment-guided {M}onte {C}arlo method.
\newblock {\em Internat. J. Numer. Methods Fluids}, 67(2):189--213, 2011.

\bibitem{DP_2013}
G.~Dimarco and L.~Pareschi.
\newblock Asymptotic preserving implicit-explicit {R}unge-{K}utta methods for
  nonlinear kinetic equations.
\newblock {\em SIAM J. Numer. Anal.}, 51(2):1064--1087, 2013.

\bibitem{FR2013}
F.~Filbet and T.~Rey.
\newblock {A Rescaling Velocity Method for Dissipative Kinetic Equations -
  Applications to Granular Media}.
\newblock 27 pages, 2012.

\bibitem{FST2013}
U.~Fjordholm, S.~Mishra, and E.~Tadmor.
\newblock Eno reconstruction and eno interpolation are stable.
\newblock {\em Foundations of Computational Mathematics}, 13:139--159, 2013.

\bibitem{HG2012}
J.R.Haack and I.M. Gamba.
\newblock Conservative deterministic spectral boltzmann solver near the grazing
  collisions limit.
\newblock {\em AIP Conference Proceedings}, 2012.

\bibitem{KA2012}
V.I. Kolobov and R.R. Arslanbekov.
\newblock Towards adaptive kinetic-fluid simulations of weakly ionized plasmas.
\newblock {\em Journal of Computational Physics}, 231(3):839 -- 869, 2012.

\bibitem{KAAFZ}
V.I. Kolobov, R.R. Arslanbekov, V.V. Aristov, A.A. Frolova, and S.A. Zabelok.
\newblock Unified solver for rarefied and continuum flows with adaptive mesh
  and algorithm refinement.
\newblock {\em Journal of Computational Physics}, 223(2):589 -- 608, 2007.

\bibitem{KAF2011}
V.I. Kolobov, R.R. Arslanbekov, and A.A. Frolova.
\newblock Boltzmann solver with adaptive mesh in velocity space.
\newblock In {\em 27th International Symposium on Rarefied Gas Dynamics},
  volume 133 of {\em AIP Conf. Proc.}, pages 928--933. AIP, 2011.

\bibitem{luc_m3as}
L.~Mieussens.
\newblock {D}iscrete {V}elocity {M}odel and {I}mplicit {S}cheme for the {BGK}
  {E}quation of {R}arefied {G}as {D}ynamics.
\newblock {\em Math. Models and Meth. in Appl. Sci.}, 8(10):1121--1149, 2000.

\bibitem{luc_jcp}
L.~Mieussens.
\newblock {D}iscrete-velocity models and numerical schemes for the
  {B}oltzmann-{BGK} equation in plane and axisymmetric geometries.
\newblock {\em J. Comput. Phys.}, 162:429--466, 2000.

\bibitem{PH2002}
V.A. Panferov and A.~G. Heintz.
\newblock A new consistent discrete-velocity model for the boltzmann equation.
\newblock {\em Mathematical Methods in the Applied Sciences}, 25(7):571--593,
  2002.

\bibitem{PP2007}
Sandra Pieraccini and Gabriella Puppo.
\newblock Implicit–explicit schemes for bgk kinetic equations.
\newblock {\em Journal of Scientific Computing}, 32:1--28, 2007.

\bibitem{woodward}
P.Woodward and P.Colella.
\newblock The numerical simulation of two-dimensional fluid flow with strong
  shocks.
\newblock {\em J. Comput. Phys.}, 54:115--173, 1984.

\bibitem{rogier-schneider}
F.~Rogier and J.~Schneider.
\newblock {A} {D}irect {M}ethod {F}or {S}olving the {B}oltzmann {E}quation.
\newblock {\em Transp. Th. Stat. Phys.}, 23(1-3):313--338, 1994.

\bibitem{shu_rapport_eno}
C.-W. Shu.
\newblock Essentially non-oscillatory and weighted essentially non-oscillatory
  schemes for hyperbolic conservation laws.
\newblock Technical Report 97-65, ICASE, 1997.

\bibitem{TSA1993}
S.~Takata, Y.~Sone, and K.~Aoki.
\newblock Numerical analysis of a uniform flow of a rarefied gas past a sphere
  on the basis of the boltzmann equation for hard-sphere molecules.
\newblock {\em Physics of Fluids A: Fluid Dynamics}, 5(3):716--737, 1993.

\bibitem{T_cicp}
V.~A. Titarev.
\newblock Efficient deterministic modelling of three-dimensional rarefied gas
  flows.
\newblock {\em Communications in Computational Physics}, 12(1):162--192, 2012.

\bibitem{XH2010}
K.~Xu and J.-C. Huang.
\newblock A unified gas-kinetic scheme for continuum and rarefied flows.
\newblock {\em J. Comput. Phys.}, 229:7747--7764, 2010.

\end{thebibliography}

\clearpage
\begin{figure}
  \centering
\includegraphics[height=0.3\textheight]{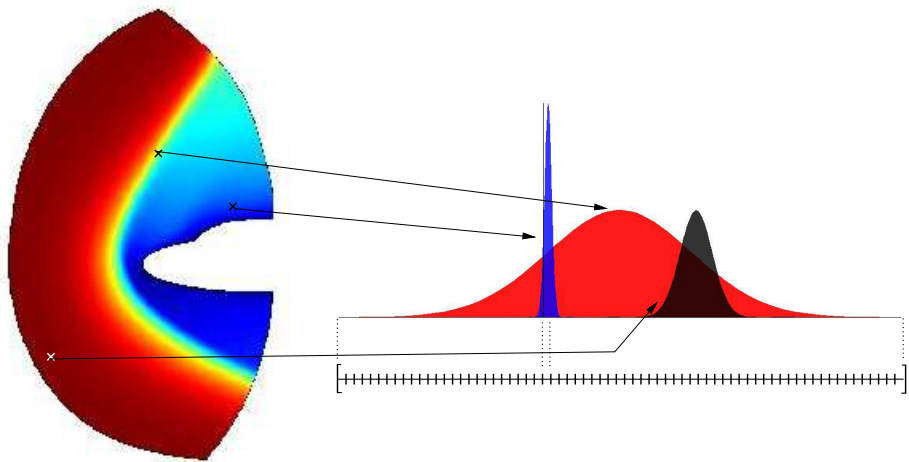}    
\caption{Three distribution functions in different space points of a
  computational domain for a re-entry problem, and the corresponding
  global discrete velocity grid ${\cal V}$.}
  \label{fig:hermes}
\end{figure}

\clearpage
\begin{figure}[!ht]
\begin{center}
\begin{minipage}[c]{0.75\textwidth}
\subfigure{
\includegraphics[angle=-90, width=\textwidth]{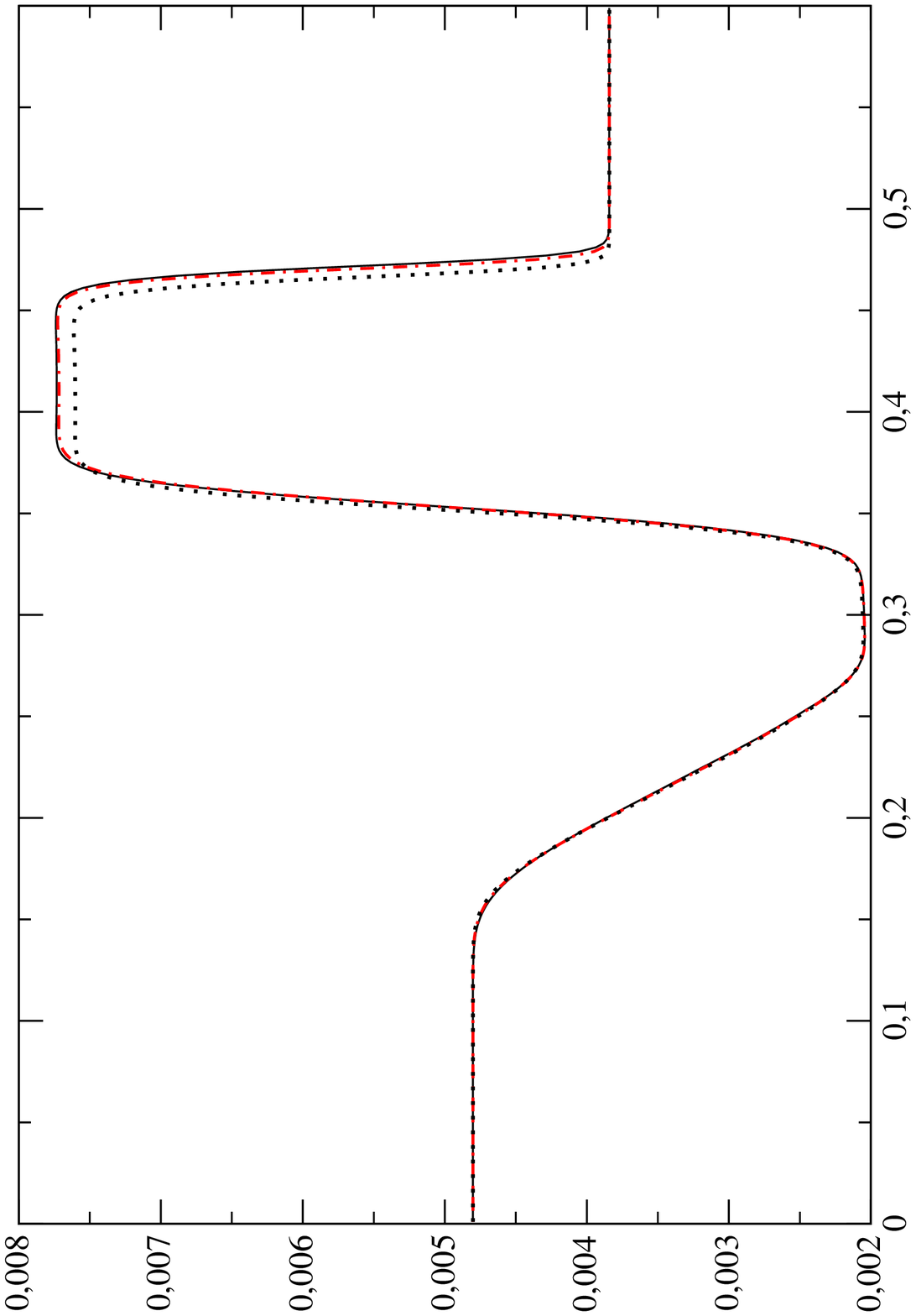}}
\end{minipage}\hfill%
\begin{minipage}[c]{0.77\textwidth}
\subfigure{
\includegraphics[angle = -90, width=\textwidth]{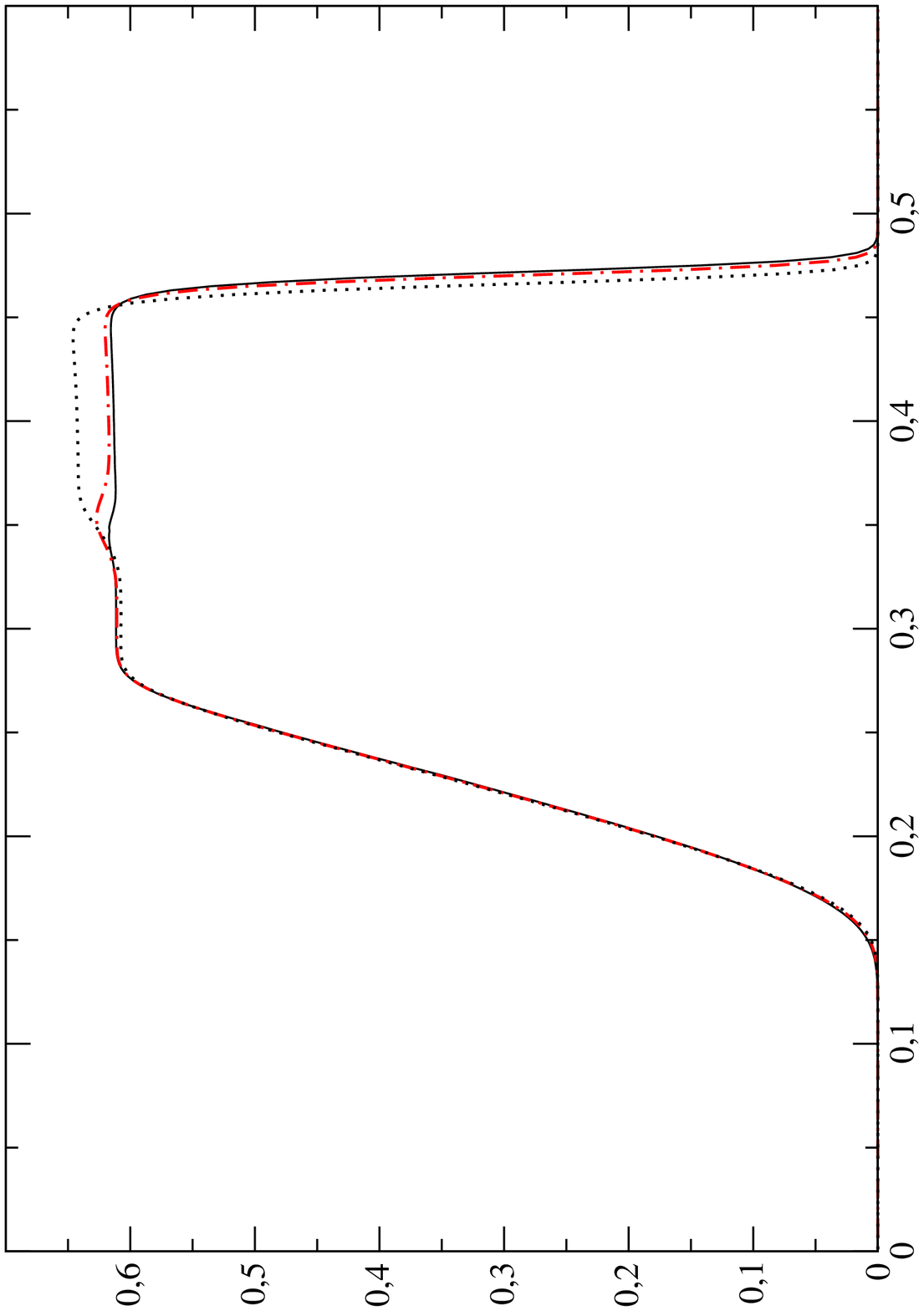}}
\end{minipage} 
\caption{Sod test case, rarefied regime: temperature (top) and
  velocity (bottom) profiles, at time
  $7.34 \; 10^{-2}$. The solid line is the reference solution obtained
  with the DVM with the enlarged
  global grid (600 points), the dotted line is the DVM with an
  incorrect grid (100 points), while the dot-dashed line is the LDV
  method (30 points).\label{fig:sod_rarefied}}
\end{center}
\end{figure}

\clearpage
\begin{figure}[!ht]
\begin{center}
\begin{minipage}[c]{0.75\textwidth}
\subfigure{
\includegraphics[angle=-90, width=\textwidth]{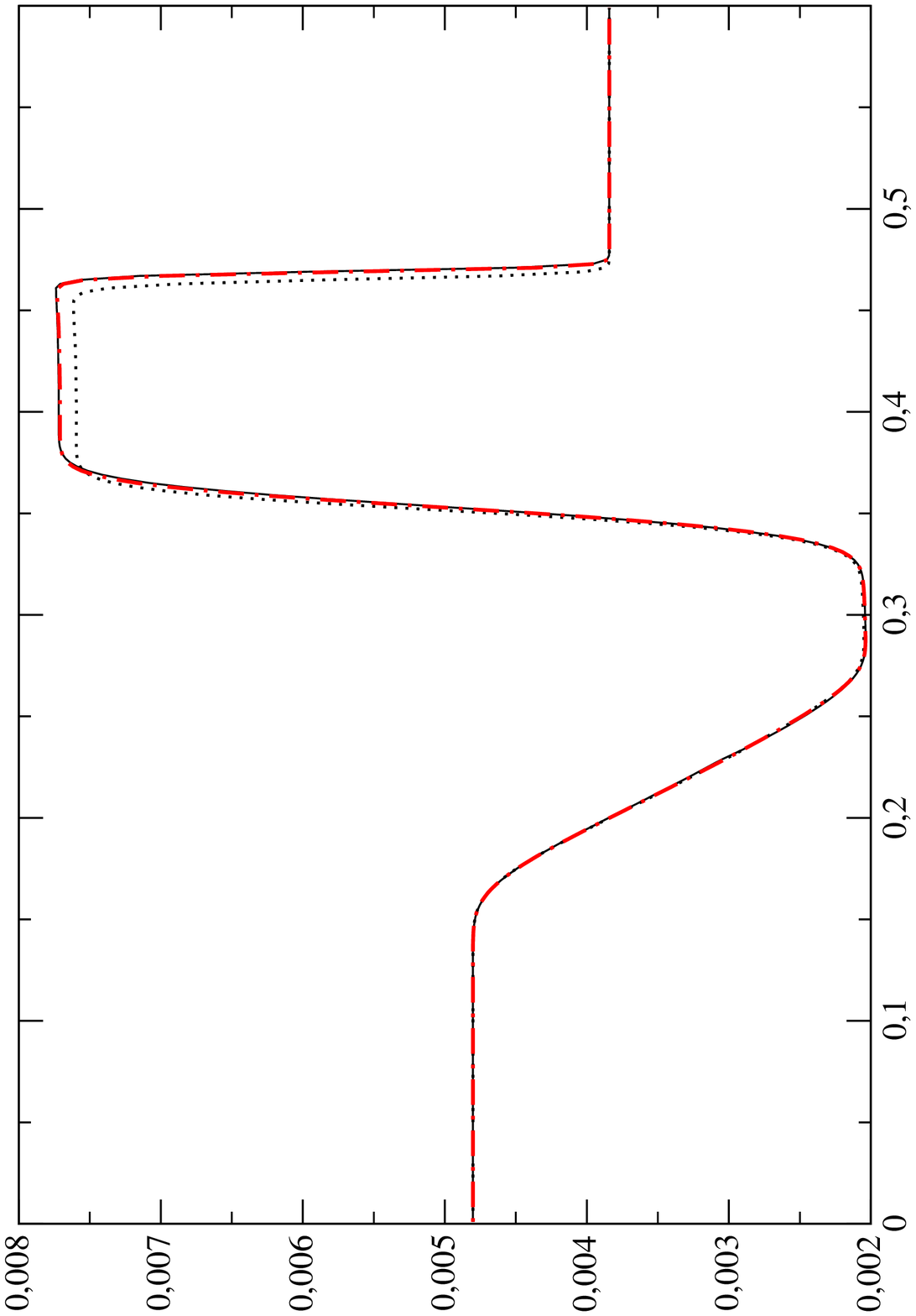}}
\end{minipage}
\begin{minipage}[c]{0.77\textwidth}
\subfigure{
\includegraphics[angle=-90, width=\textwidth]{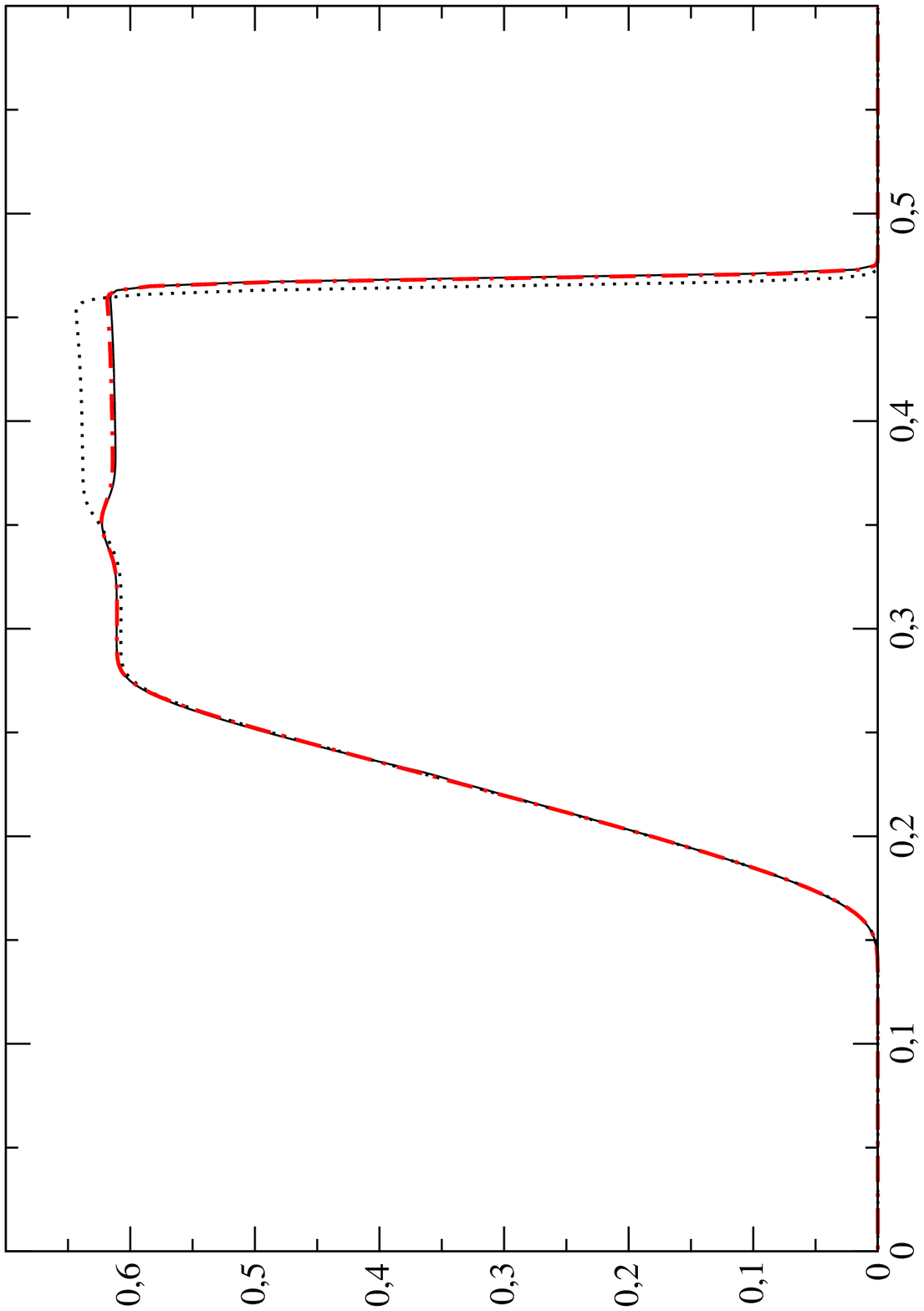}}
\end{minipage} 
\caption{Sod test case, fluid regime: temperature (top) and
  velocity (bottom) profiles, at time
  $7.34 \; 10^{-2}$. The solid line is the reference solution obtained
  with the DVM with the enlarged
  global grid (100 points), the dotted line is the DVM with an
  incorrect grid (100 points), while the dot-dashed line is the LDV
  method (10 points).\label{fig:sod_fluid}}
\end{center}
\end{figure}

\clearpage
\begin{figure}[!ht]
\begin{center}
\begin{minipage}[c]{0.73\textwidth}
\subfigure{
\includegraphics[
  angle=-90,width=\textwidth]{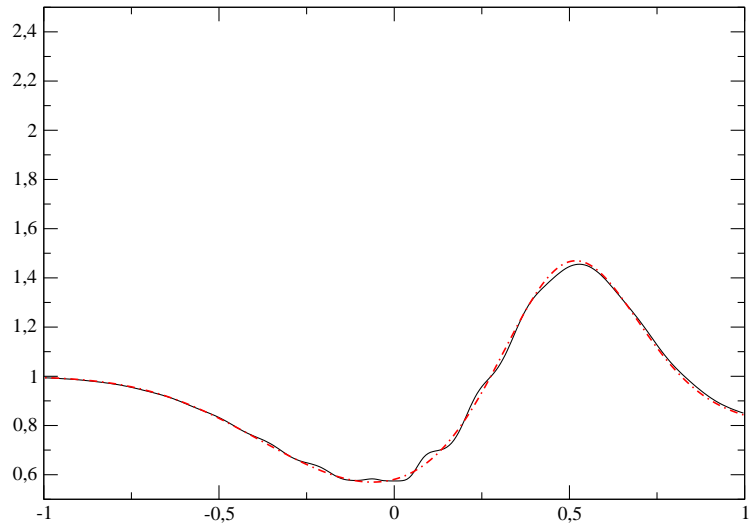}}
\end{minipage} 
\begin{minipage}[c]{0.73\textwidth}
\subfigure{
\includegraphics[ angle=-90, width=\textwidth]{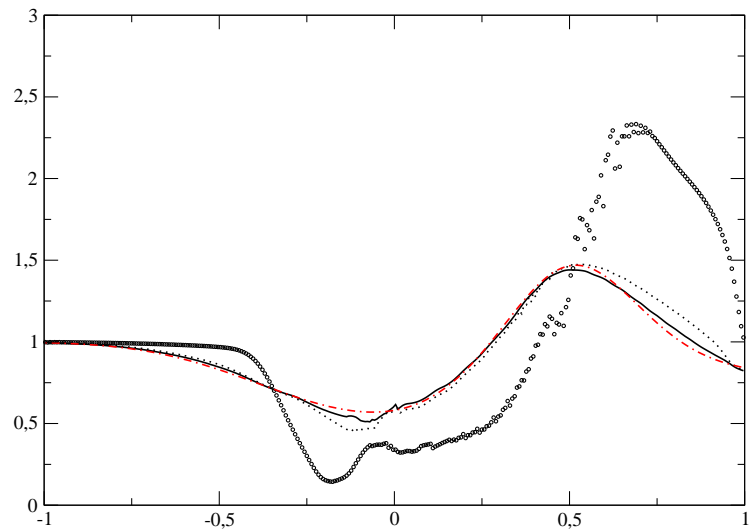}}
\end{minipage}
\caption{Sod test case, free transport: temperature at
  time $0.3$. Top: comparison between the exact solution (dot-dashed)
  and with a 30 points DVM (solid). Bottom, comparison between
  the exact solution (dot-dashed) and several LDVs with 30 points:
  with first order interpolation ('o'), with 3 points-ENO (dotted), with 4
  points-ENO (solid). } \label{freesodtemperature}
\end{center}
\end{figure}

\clearpage
\begin{figure}
\begin{center}
\begin{minipage}[c]{0.77\textwidth}
\subfigure{
\includegraphics[angle=-90,
  width=\textwidth]{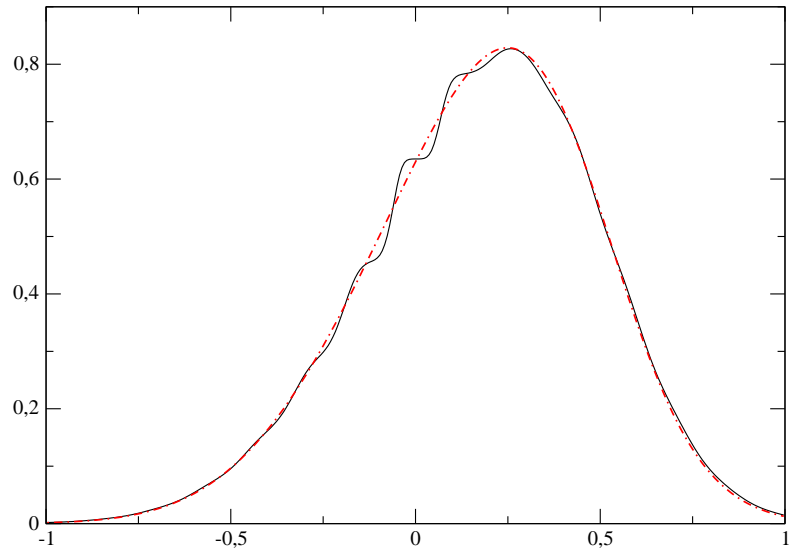}}
\end{minipage} 
\begin{minipage}[c]{0.77\textwidth}
\subfigure{
\includegraphics[angle = -90, width=\textwidth]{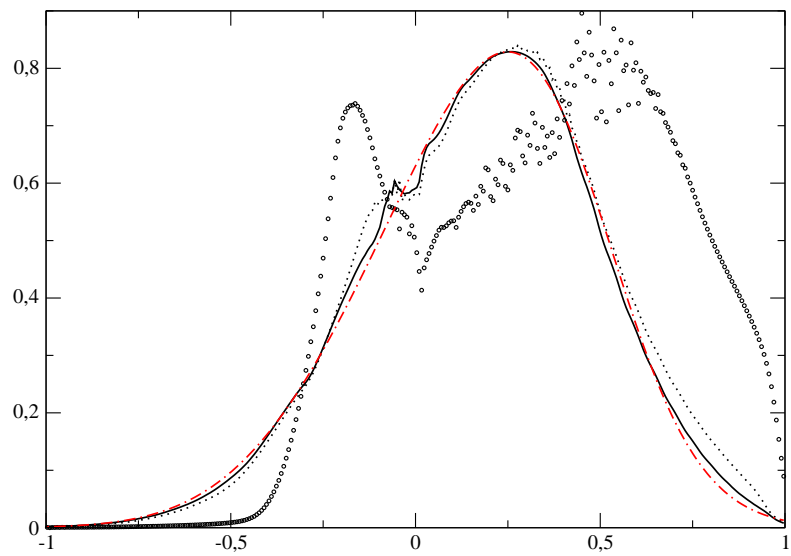}}
\end{minipage}
\caption{Same as figure \ref{freesodtemperature} for the velocity.} \label{freesodvitesse}
\end{center}
\end{figure}

\clearpage
\begin{figure}
\begin{center}
\begin{minipage}[c]{0.77\textwidth}
\subfigure{
\includegraphics[angle=-90, width=\textwidth]{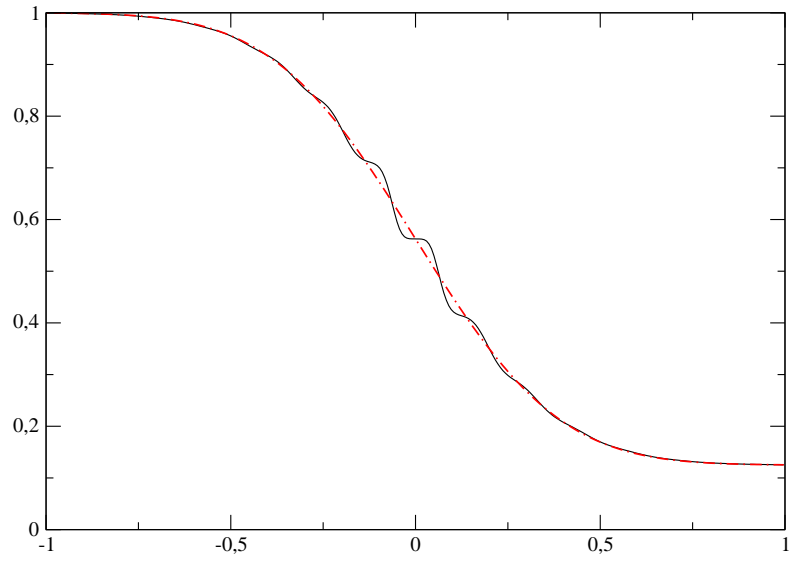}}
\end{minipage}
\begin{minipage}[c]{0.77\textwidth}
\subfigure{
\includegraphics[angle=-90, width=\textwidth]{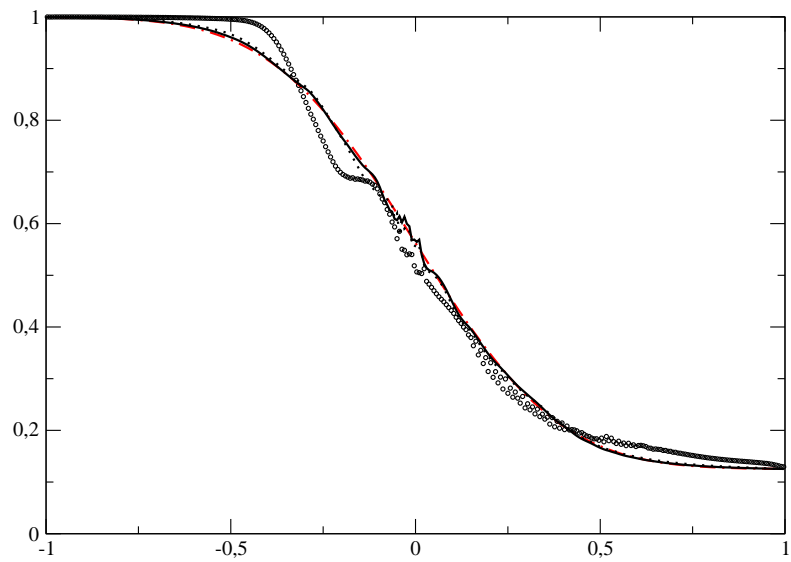}}
\end{minipage}
\caption{Same as figure \ref{freesodtemperature} for the density.} \label{freesoddensite}
\end{center}
\end{figure}

\clearpage
\begin{figure}[!ht]
\begin{center}
\includegraphics[angle=-90,
width=\textwidth]{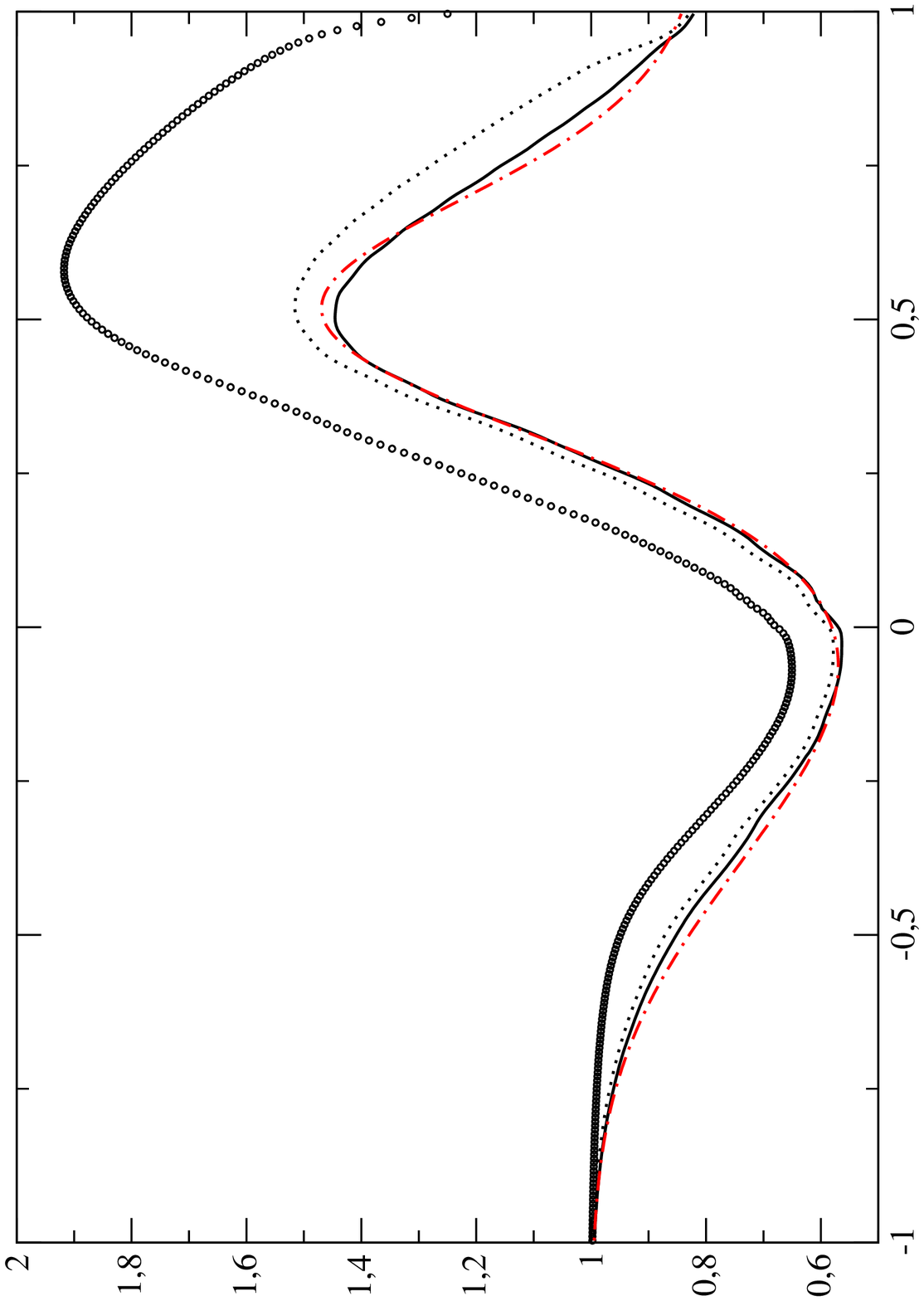}
\caption{Sod test case, free transport: temperature at
  time $0.3$, comparison between
  the exact solution (dot-dashed) and several LDVs with 30 points and
  the {\it moment correction method}:
  with first order interpolation ('o'), with 3 points-ENO (dotted), with 4
  points-ENO (solid). } \label{fig:sod_step4_temperature}
\end{center}
\end{figure}

\clearpage
\begin{figure}[!ht]
\begin{center}
\includegraphics[angle=-90, width=\textwidth]{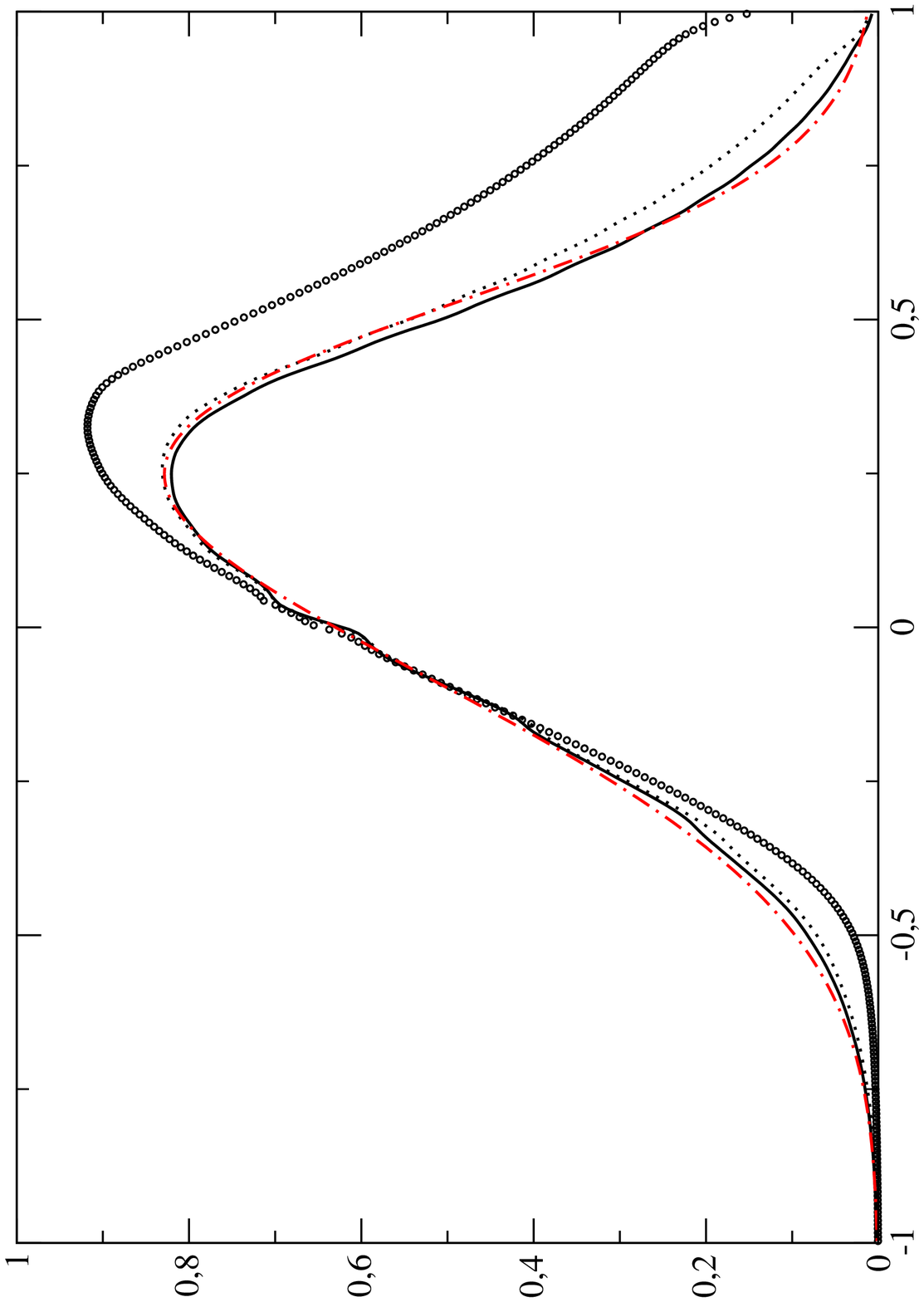}
\caption{Sod test case, free transport: velocity at
  time $0.3$, comparison between
  the exact solution (dot-dashed) and several LDVs with 30 points and
  the  {\it moment correction method}:
  with first order interpolation ('o'), with 3 points-ENO (dotted), with 4
  points-ENO (solid). } \label{fig:sod_step4_velocity}
\end{center}
\end{figure}

\clearpage
\begin{figure}[!ht]
\begin{center}
\includegraphics[angle=-90, width=\textwidth]{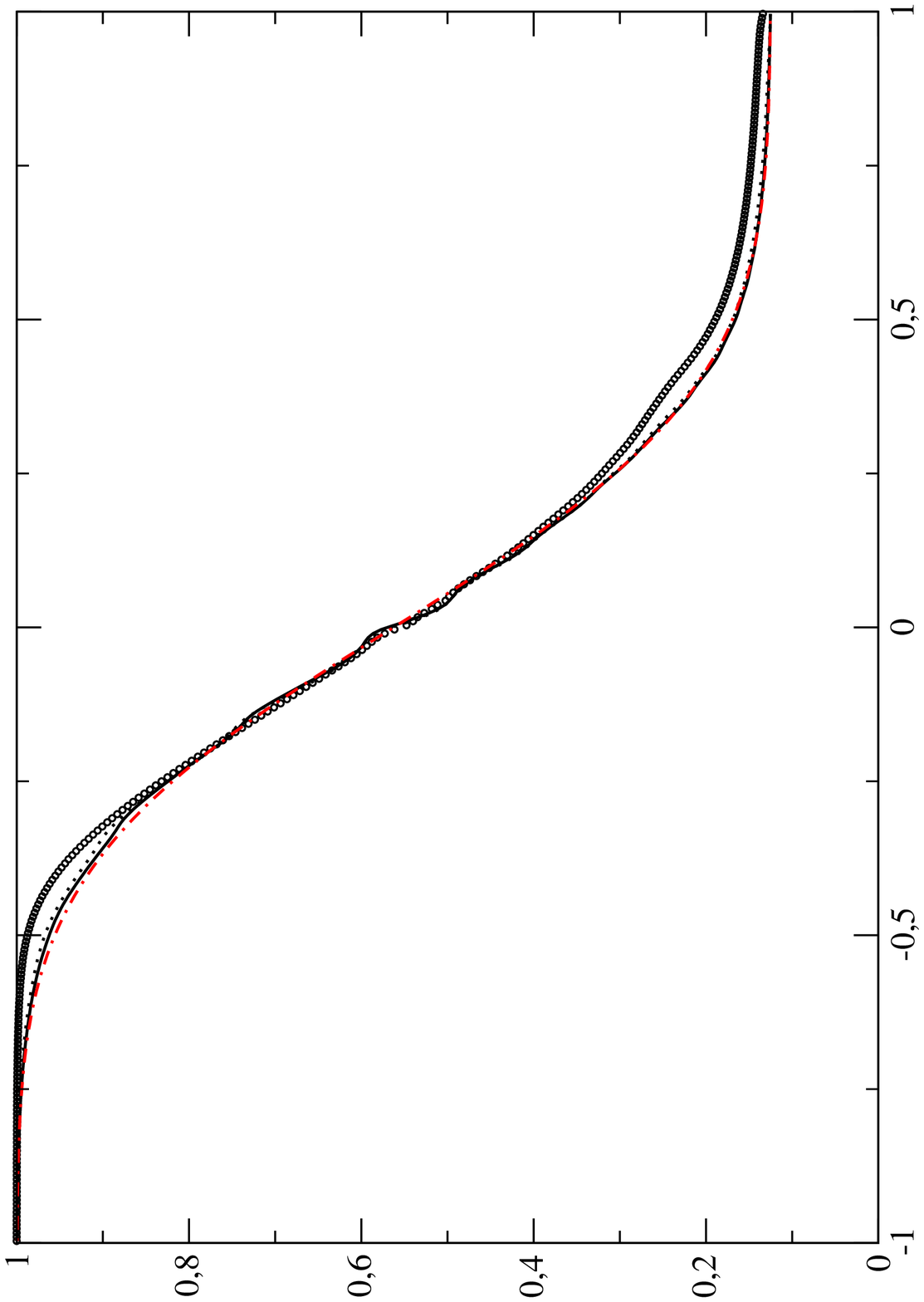}
\caption{Sod test case, free transport: density at
  time $0.3$, comparison between
  the exact solution (dot-dashed) and several LDVs with 30 points and
  the  {\it moment correction method}:
  with first order interpolation ('o'), with 3 points-ENO (dotted), with 4
  points-ENO (solid). } \label{fig:sod_step4_density}
\end{center}
\end{figure}

\clearpage
\begin{figure}[!ht]
\begin{center}
\begin{minipage}[c]{0.75\textwidth}
\subfigure{\includegraphics[angle=-90, width=\textwidth]{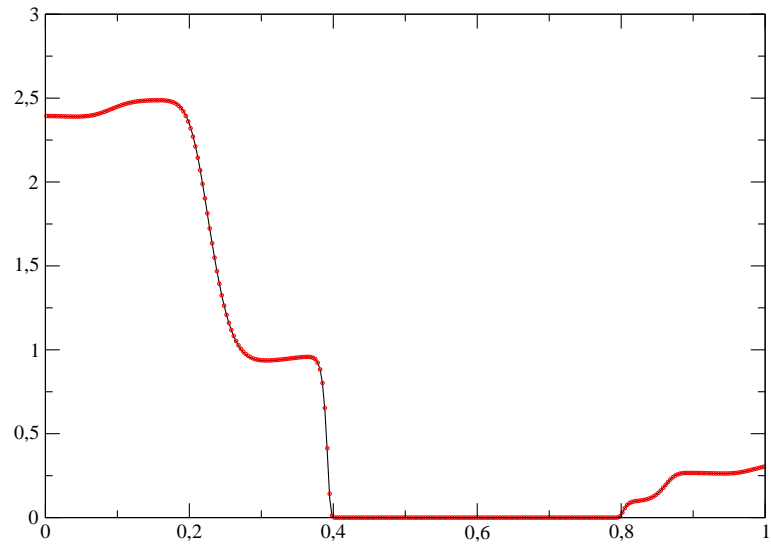}}
\end{minipage}
\begin{minipage}[c]{0.75\textwidth}
\subfigure{\includegraphics[angle=-90,
  width=\textwidth]{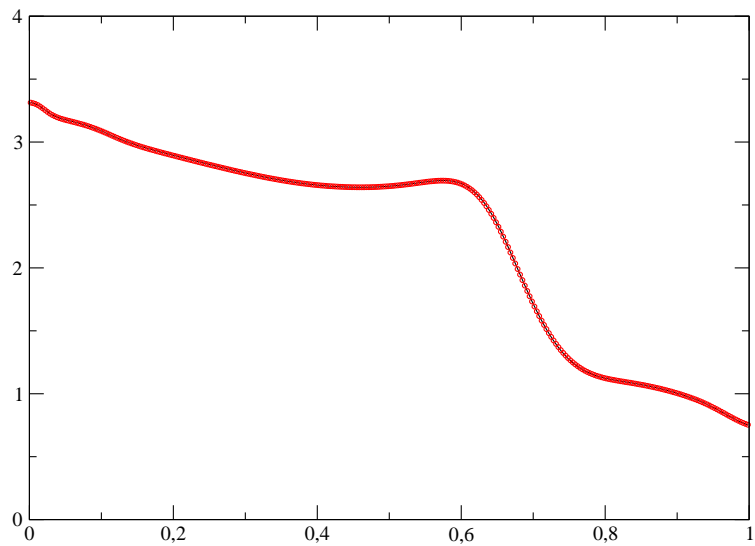}}
\end{minipage}
\caption{``Two interacting blast waves'': temperature, before the shock at time $0.008$ (top) and after the shock at time $0.05 $
  (bottom). The solid line is the solution obtained with the LDV
  method ($30$ points), the dotted line is the solution  
  computed with the DVM method ($2\, 551$ velocities).} \label{temperatureblast}
\end{center}
\end{figure}

\clearpage
\begin{figure}[!ht]
\begin{center}
\begin{minipage}[c]{0.75\textwidth}
\subfigure{
\includegraphics[angle=-90, width=\textwidth]{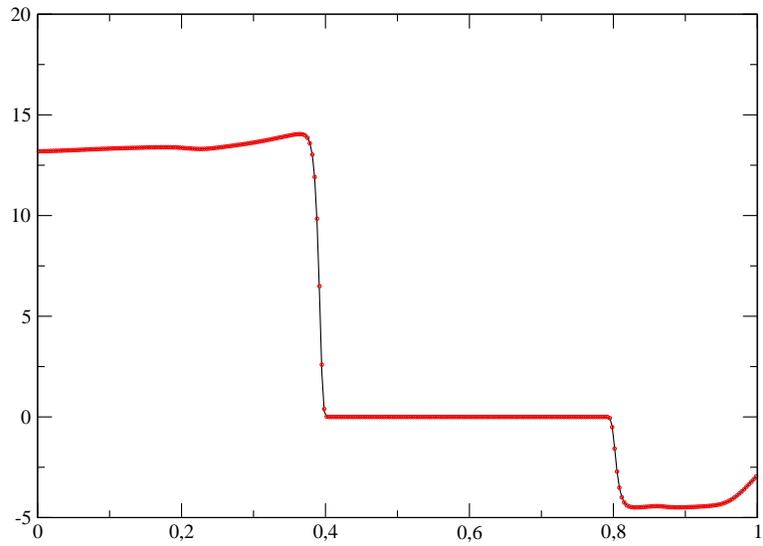}} 
\end{minipage} \hfill
\begin{minipage}[c]{0.75\textwidth}
\subfigure{
\includegraphics[angle=-90, width=\textwidth]{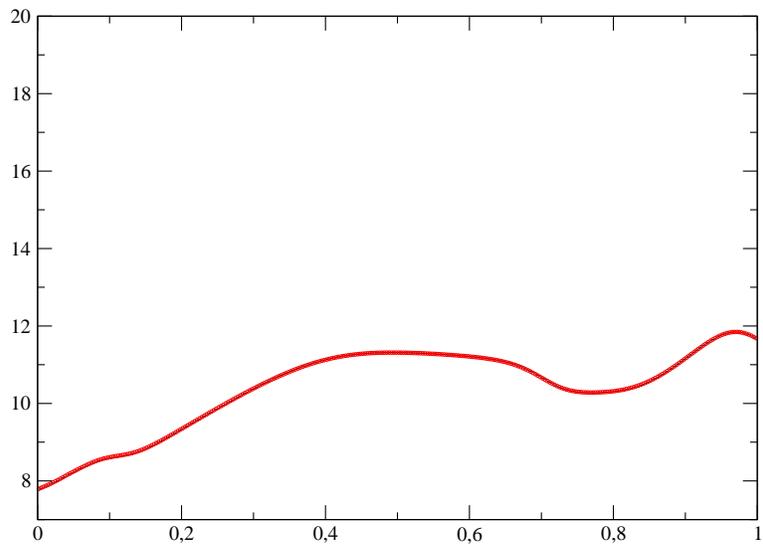}} 
\end{minipage}
\caption{``Two interacting blast waves'': velocity (same as figure~\ref{temperatureblast}).} \label{vitesseblast}
\end{center}
\end{figure}

\clearpage
\begin{figure}[!ht]
\begin{center}
\begin{minipage}[c]{0.75\textwidth}
\subfigure{
\includegraphics[angle=-90, width=\textwidth]{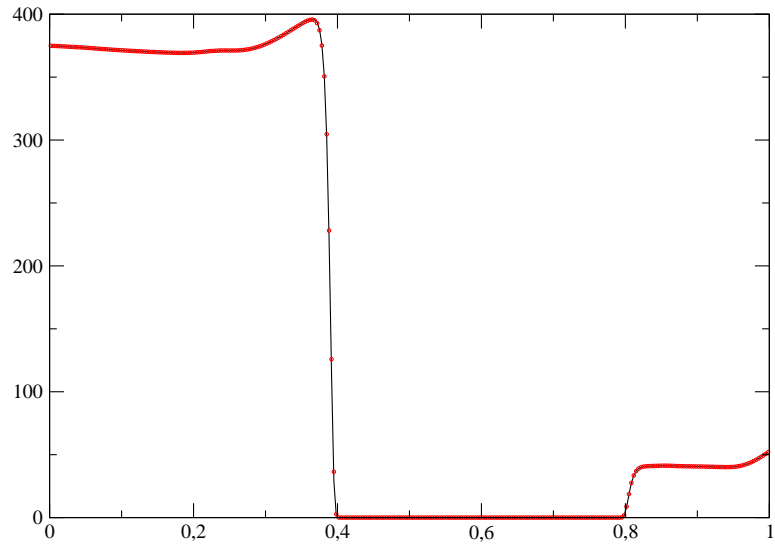}}
\end{minipage}\hfill%
\begin{minipage}[c]{0.75\textwidth}
\subfigure{
\includegraphics[angle=-90, width=\textwidth]{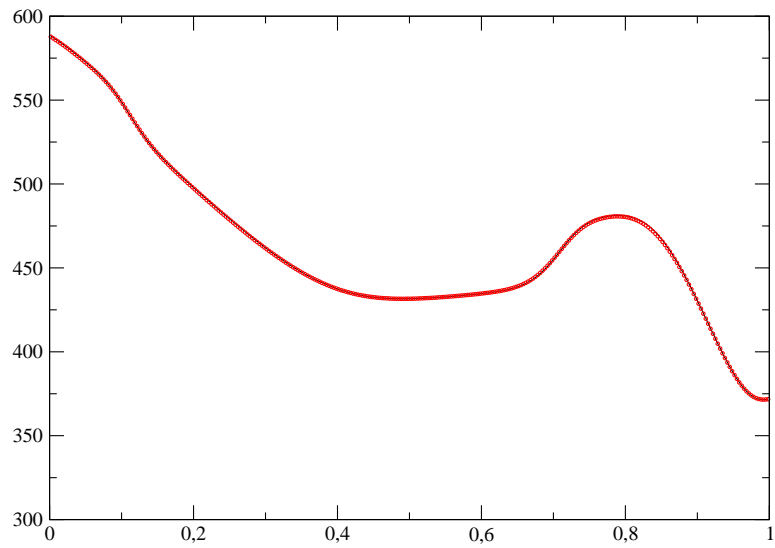}}
\end{minipage}
\caption{``Two interacting blast waves'': pressure (same as figure~\ref{temperatureblast}).} \label{pressionblast}
\end{center}
\end{figure}

\clearpage
\begin{figure}[!ht]
  \centering
  \includegraphics[angle=-90, width=0.75\textwidth]{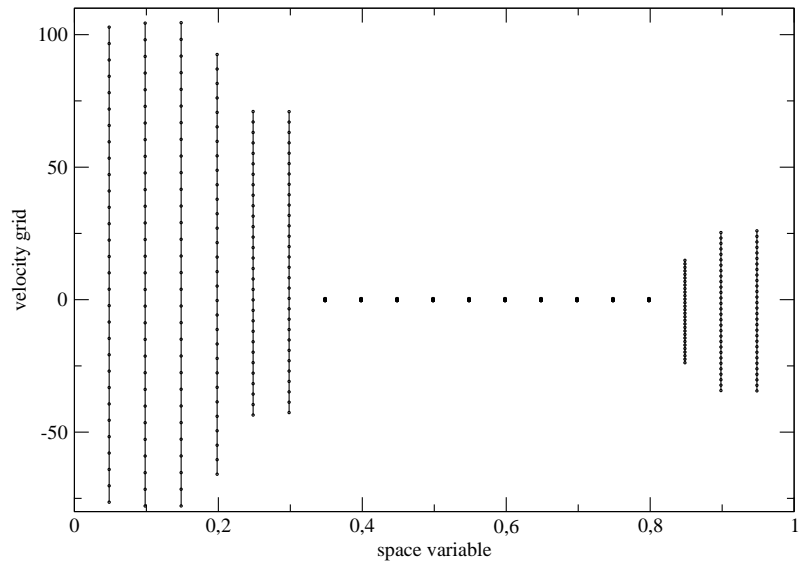}
  \includegraphics[angle=-90, width=0.75\textwidth]{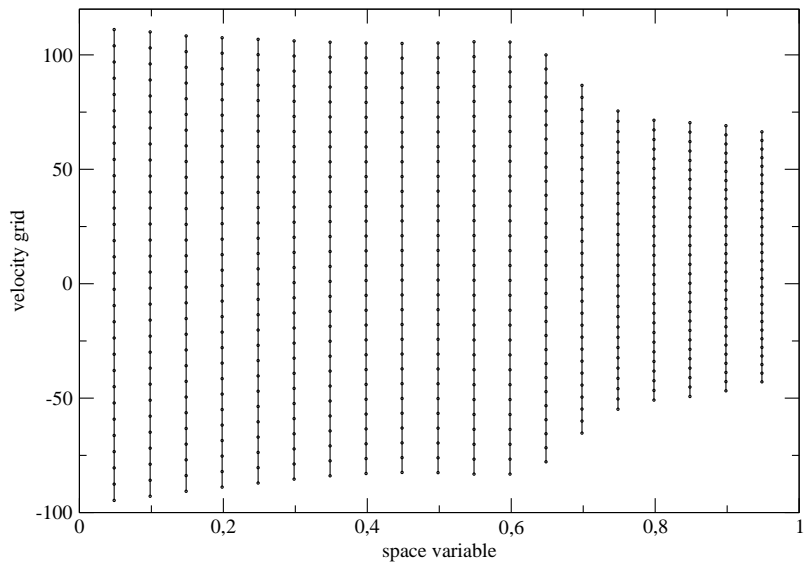}
  \caption{``Two interacting blast waves'': some local velocity grids for different space positions, before the shock at time 0.008 (top) and after the shock at time 0.05 (bottom).}
  \label{fig:blast_grids}
\end{figure}

\clearpage
\begin{figure}[!ht]
\begin{center}
\begin{minipage}[c]{0.7\textwidth}
\subfigure{
\includegraphics[angle=-90,width=\textwidth]{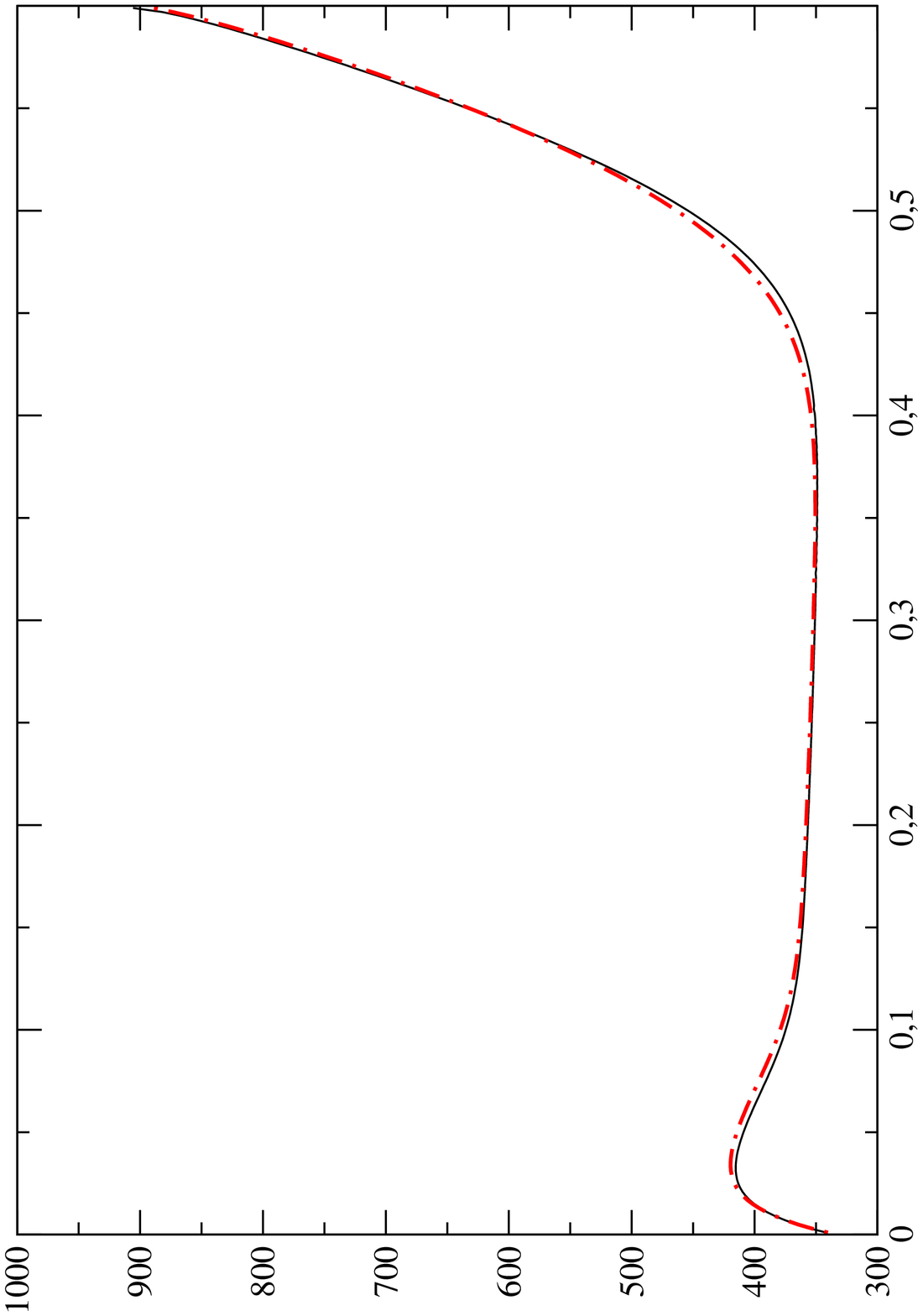}}
\end{minipage}
\begin{minipage}[c]{0.7\textwidth}
\subfigure{
\includegraphics[angle=-90,width=\textwidth]{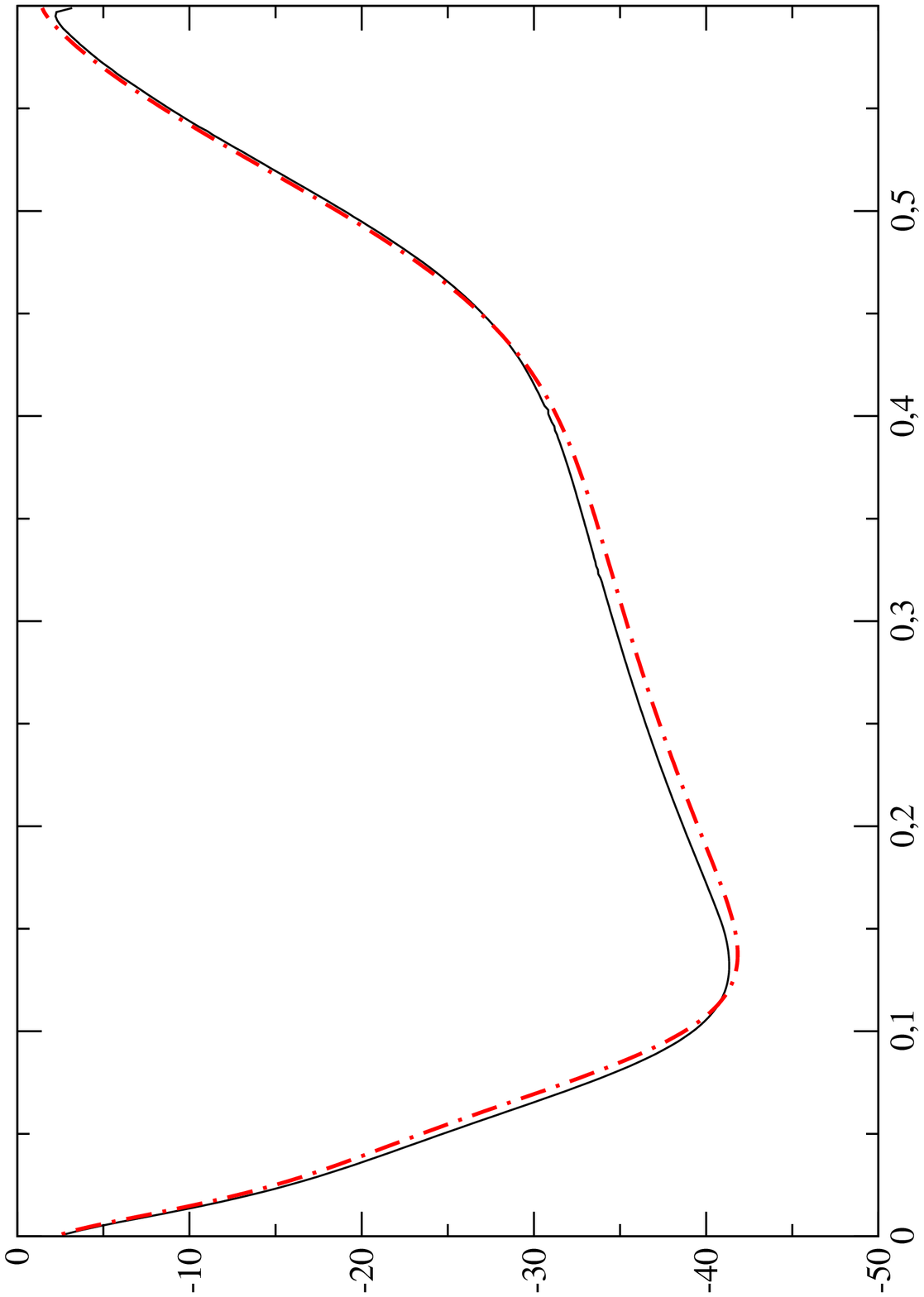}}
\end{minipage}
\caption{Heat transfer problem, transitional regime: temperature (top) and velocity
(bottom) at time $1.3 \, 10^{-3}$, $\Kn = 10^{-2}$. The solid line is the solution given
  by the LDV method ($30$ velocities), the dot-dashed line is the DVM method
  (30 velocities).\label{fig:heat_trans} } 
\end{center}
\end{figure}

\clearpage
\begin{figure}[!ht]
\begin{center}
\begin{minipage}[c]{0.75\textwidth}
\subfigure{
\includegraphics[angle=-90, width=\textwidth]{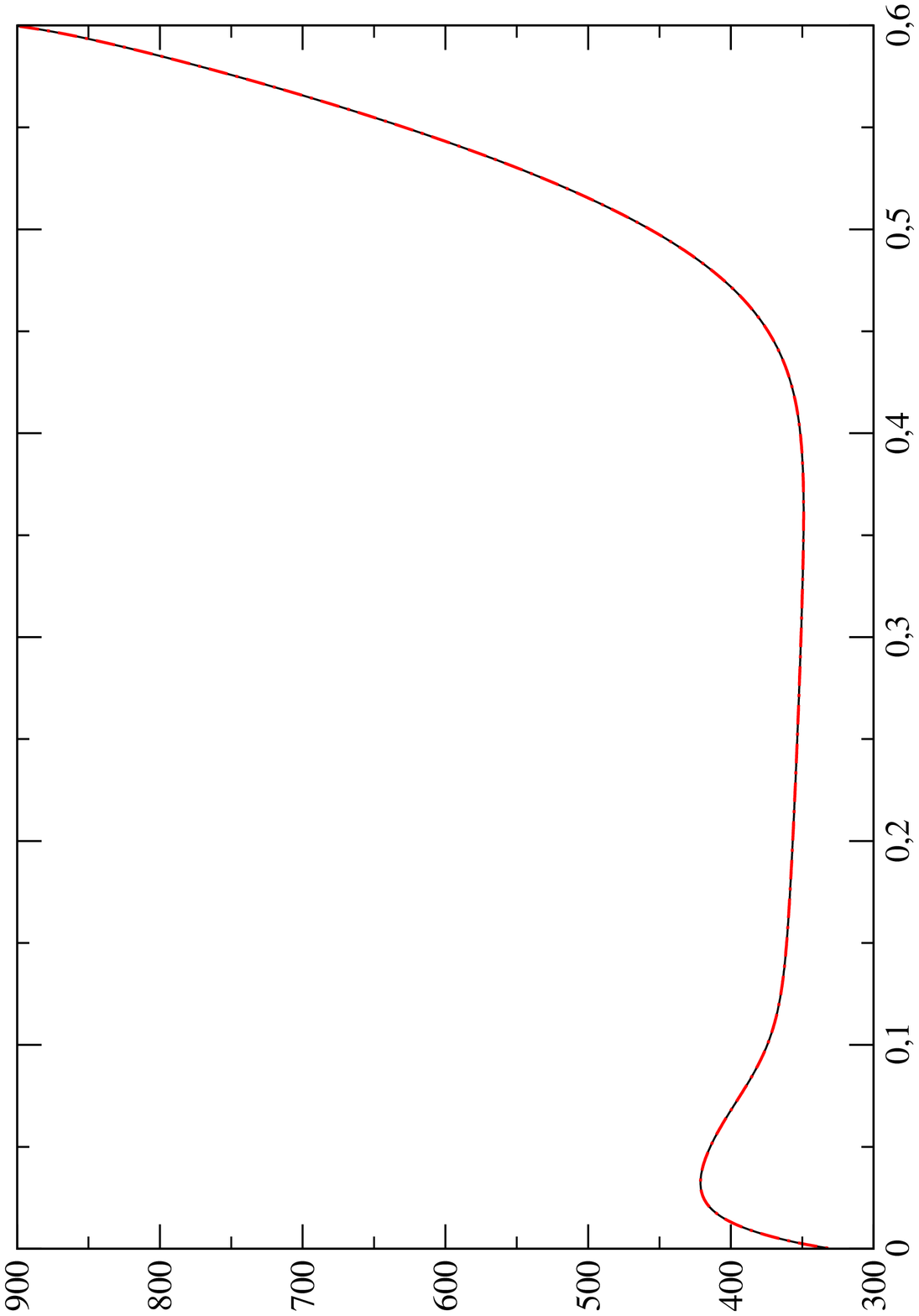}}
\end{minipage} 
\begin{minipage}[c]{0.75\textwidth}
\subfigure{
\includegraphics[angle=-90, width=\textwidth]{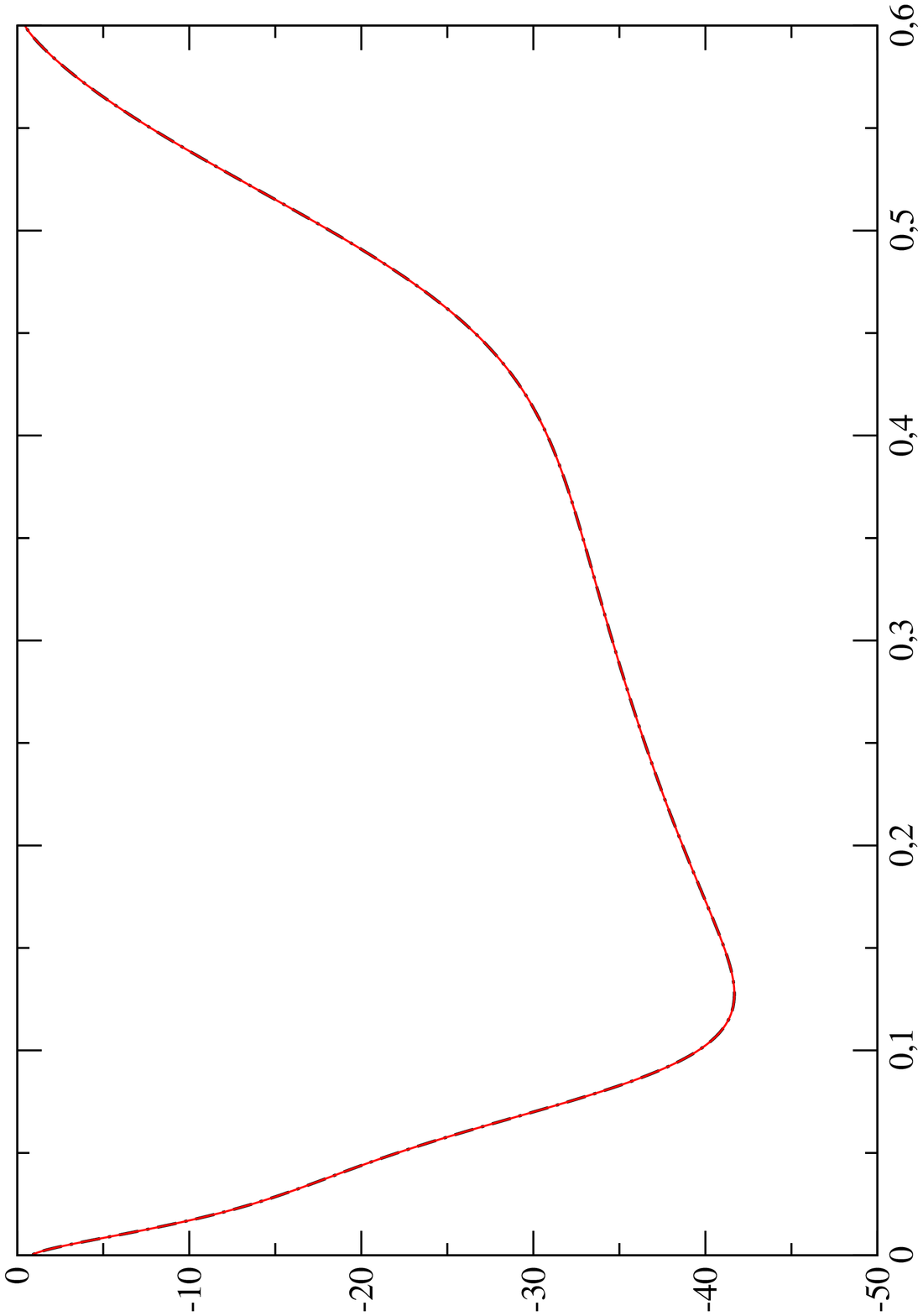}}  
\end{minipage}
\caption{Heat transfer problem, transitional regime (test of the non
  symmetric local grid): temperature (top) and velocity (bottom) at
  time $1.3 \, 10^{-3}$, $\Kn = 10^{-2}$. The solid line is the solution
  given by the LDV method ($30$ velocities), the dot-dashed line is the
  DVM method (30 velocities).  } \label{transfertelargievitesse_etape4}
\end{center}
\end{figure}

\clearpage
\begin{figure}[!ht]
\begin{center}
\begin{minipage}[c]{0.75\textwidth}
\subfigure{
\includegraphics[angle=-90, width=\textwidth]{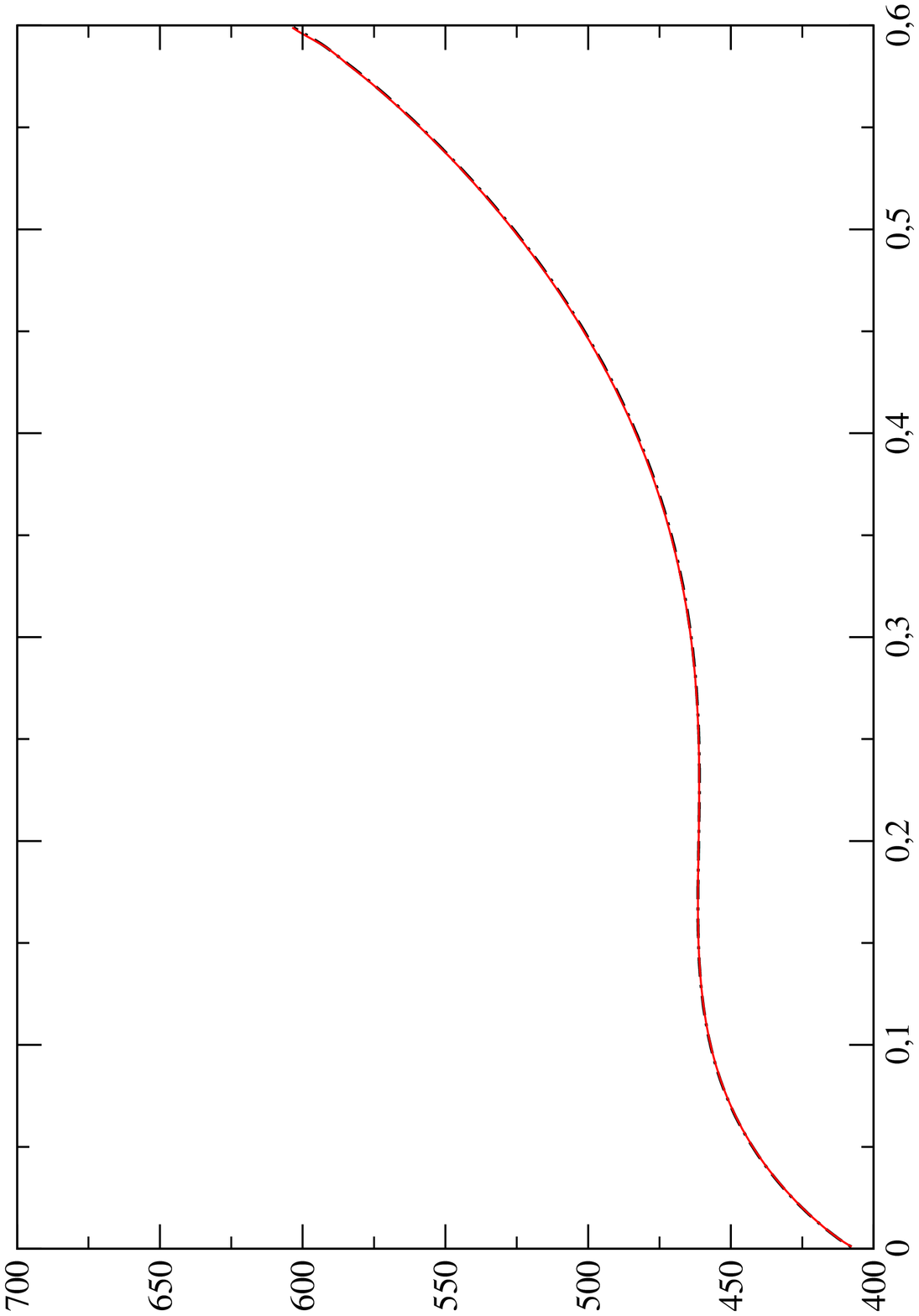}}
\end{minipage} 
\begin{minipage}[c]{0.75\textwidth}
\subfigure{
\includegraphics[angle=-90, width=\textwidth]{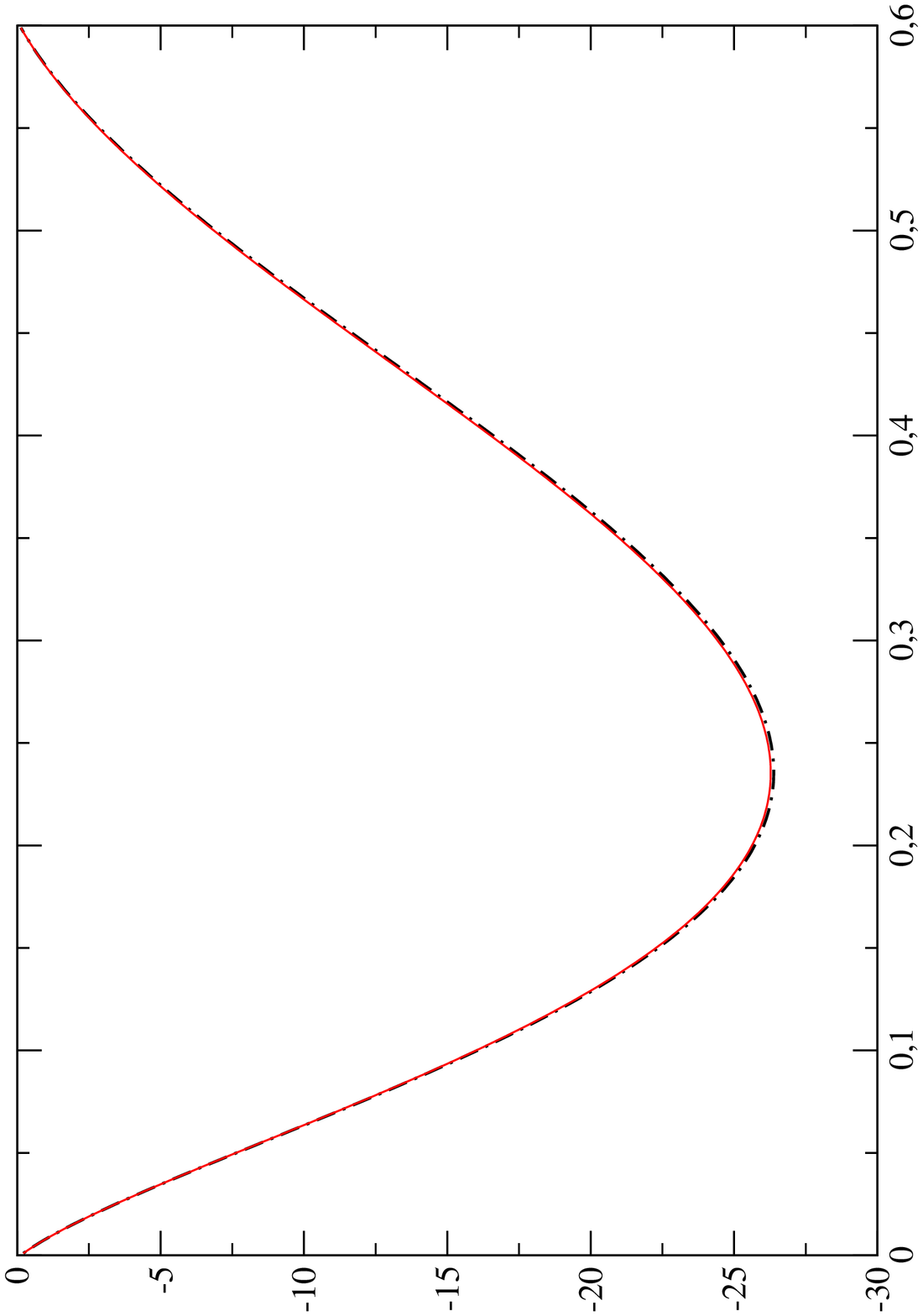}}  
\end{minipage}
\caption{Heat transfer problem, rarefied regime: temperature (top) and velocity (bottom) at
  time $1.3 \, 10^{-3}$, $\Kn = 1$. The solid line is the solution
  given by the LDV method ($300$ velocities, non symmetric local grids), the dot-dashed line is the
  DVM method (300 velocities).  } \label{fig:heat_kn0}
\end{center}
\end{figure}

\clearpage
\begin{figure}[!ht]
\begin{center}
\begin{minipage}[c]{0.75\textwidth}
\subfigure{
\includegraphics[angle=-90, width=\textwidth]{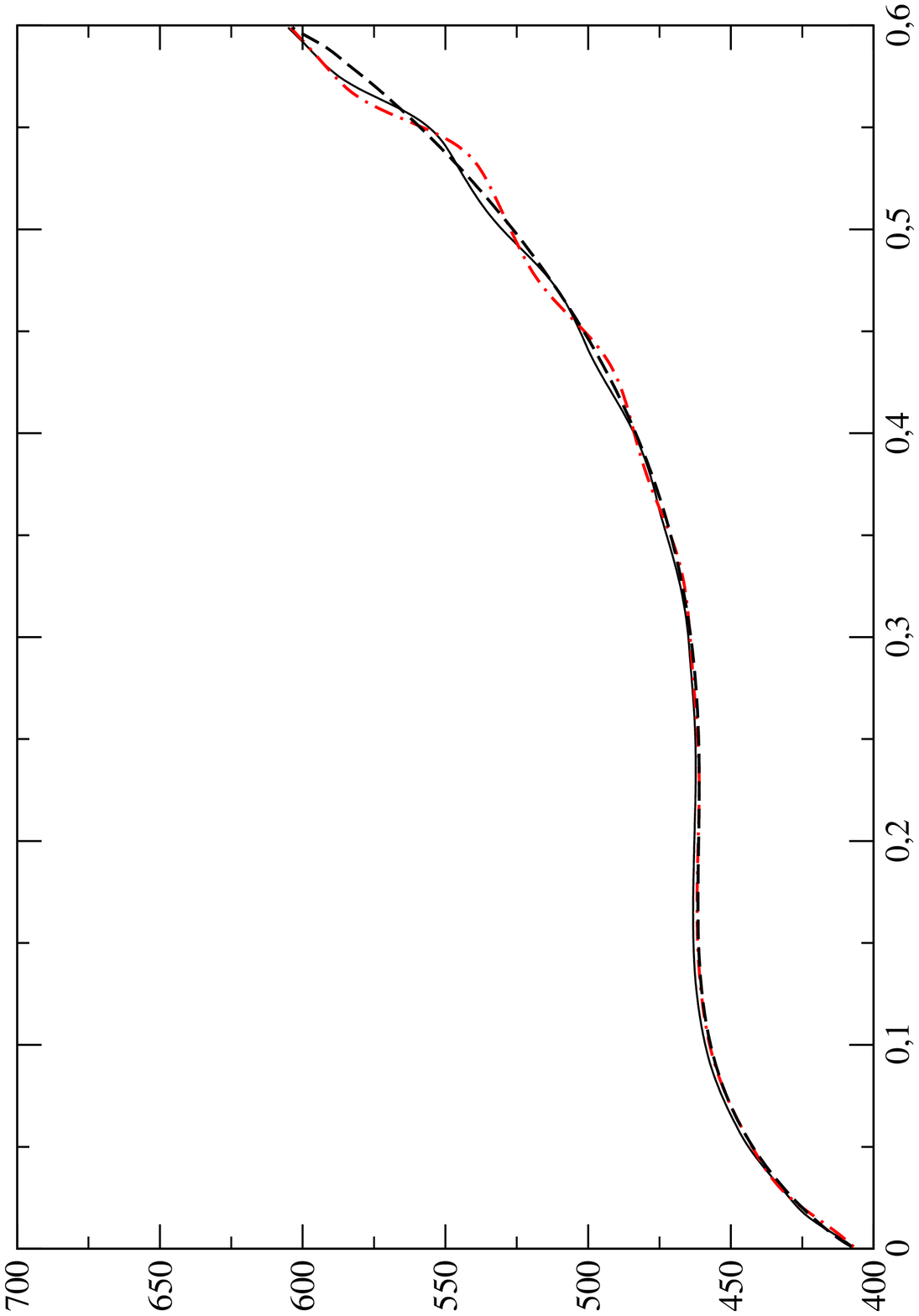}}
\end{minipage} 
\begin{minipage}[c]{0.75\textwidth}
\subfigure{
\includegraphics[angle=-90, width=\textwidth]{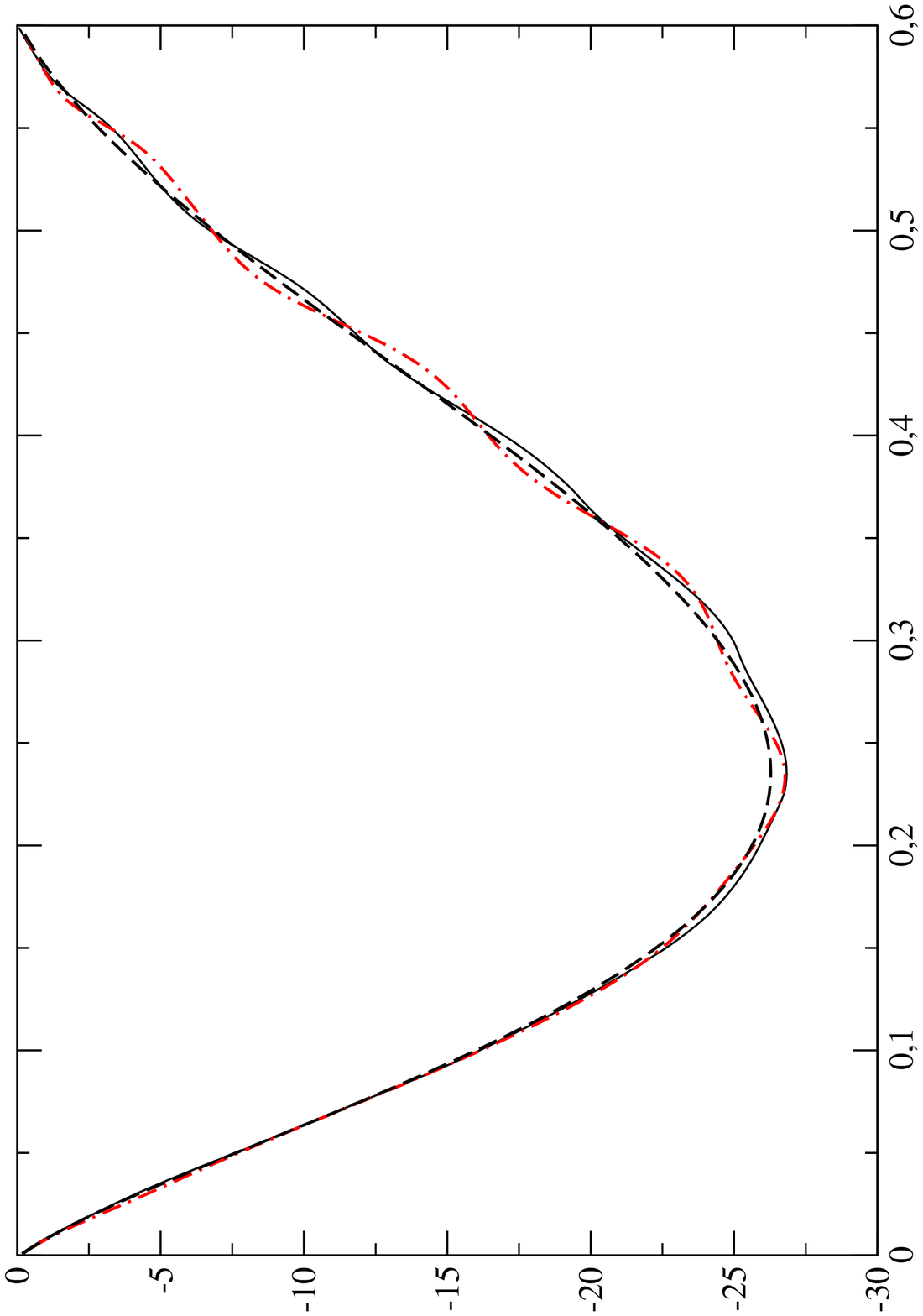}}  
\end{minipage}
\caption{Heat transfer problem, rarefied regime: temperature (top) and velocity (bottom) at
  time $1.3 \, 10^{-3}$, $\Kn = 1$. The solid line is the solution
  given by the LDV method ($50$ velocities, non symmetric local grids), the dot-dashed line is the
  DVM method (50 velocities), the dashed line is the DVM method with
  300 velocities (reference solution).  } \label{fig:heat_kn0_50}
\end{center}
\end{figure}

\clearpage
\begin{figure}[!ht]
\begin{center}
\begin{minipage}[c]{0.75\textwidth}
\subfigure{
\includegraphics[angle=-90, width=\textwidth]{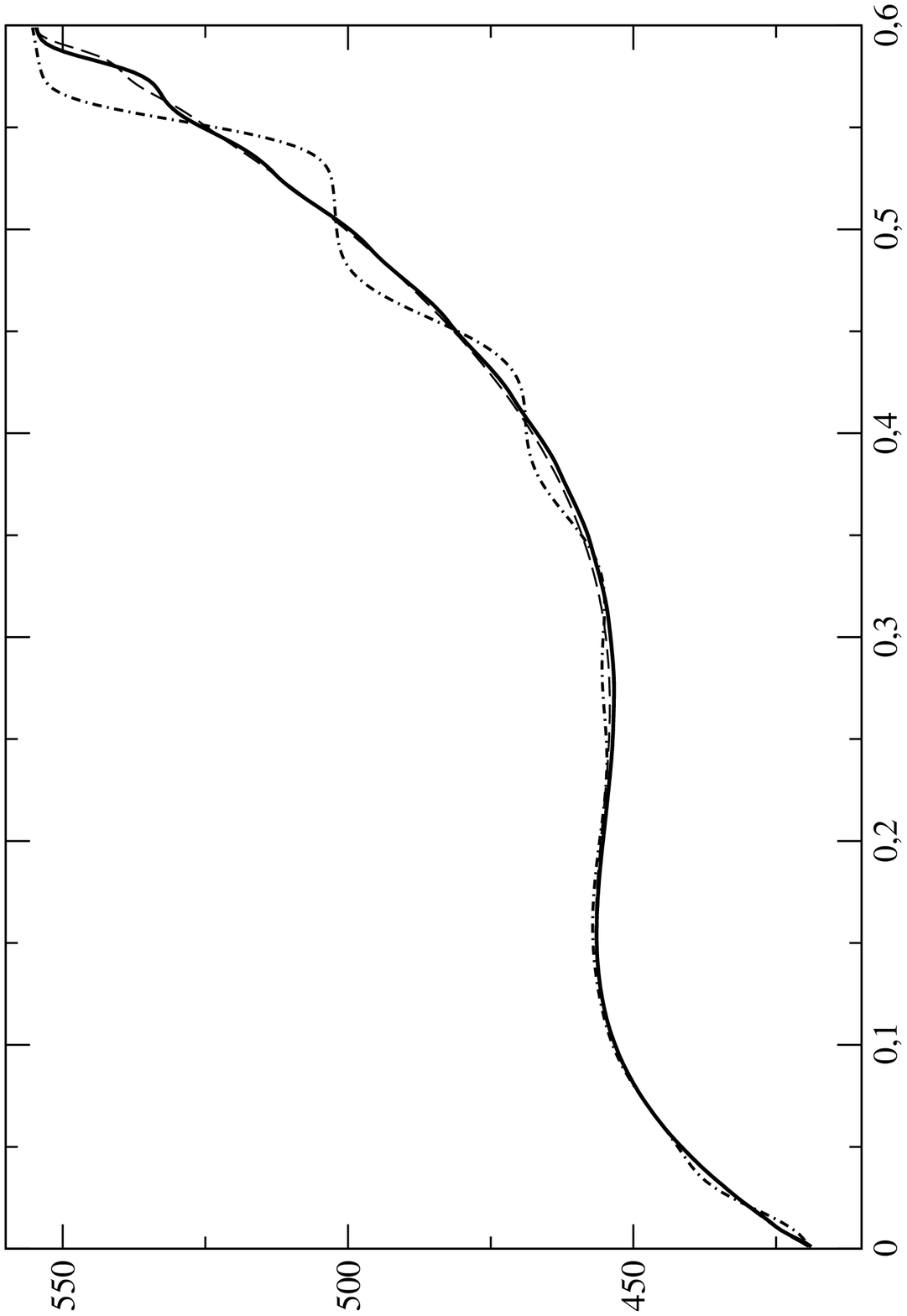}}
\end{minipage} 
\begin{minipage}[c]{0.75\textwidth}
\subfigure{
\includegraphics[angle=-90, width=\textwidth]{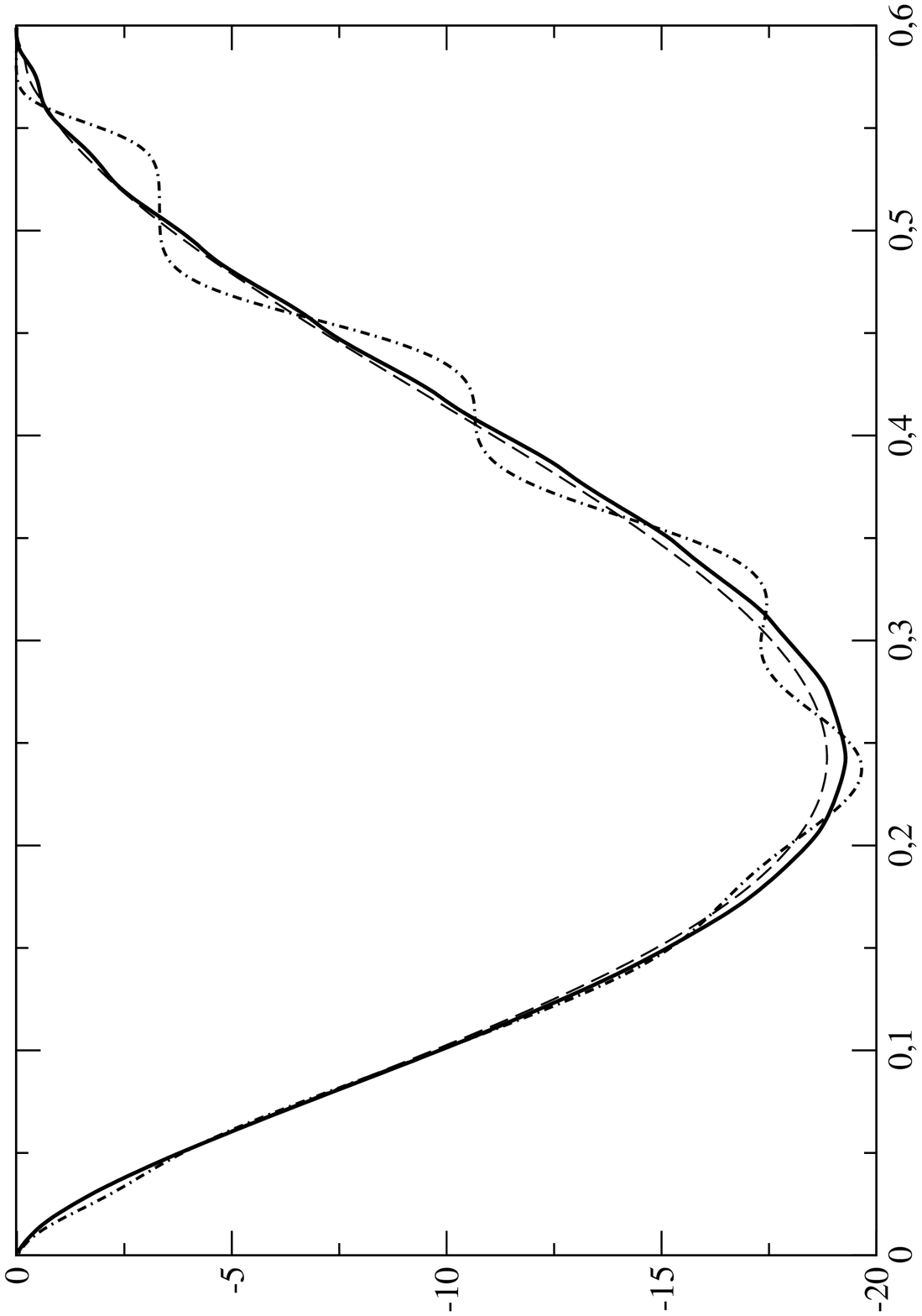}}  
\end{minipage}
\caption{Heat transfer problem, rarefied regime: temperature (top) and velocity (bottom) at
  time $1.3 \, 10^{-3}$, $\Kn = 10$. The solid line is the LDV method with
  100 velocities, the dot-dashed line is the DVM method (100 velocities), the dashed line is the solution
  given by the DVM method ($400$ velocities, reference solution).  } \label{fig:heat_kn10}
\end{center}
\end{figure}

\clearpage 

\begin{figure}[!ht]
\begin{center}
\begin{minipage}[c]{0.75\textwidth}
\subfigure{
\includegraphics[angle=-90, width=\textwidth]{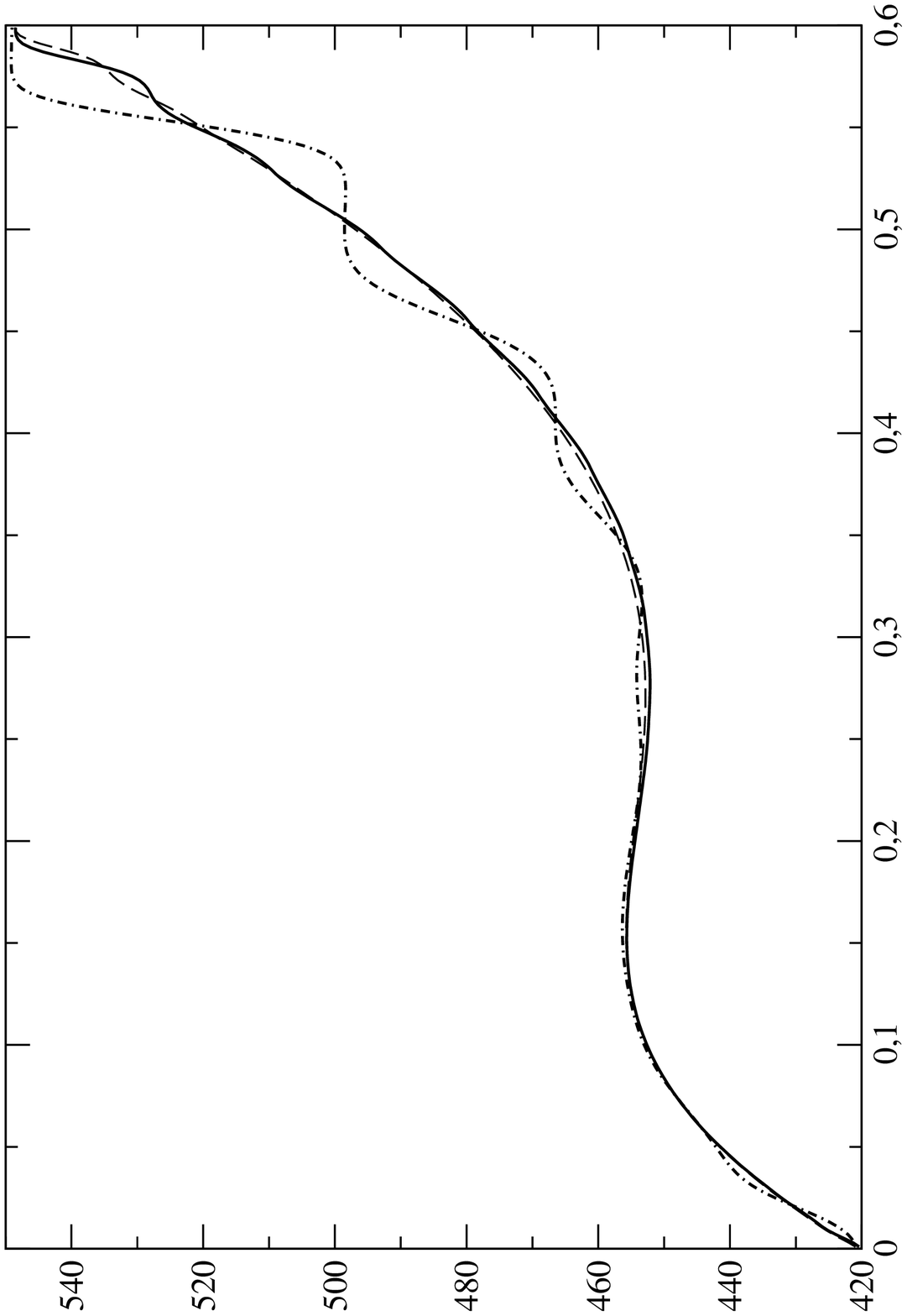}}
\end{minipage} 
\begin{minipage}[c]{0.75\textwidth}
\subfigure{
\includegraphics[angle=-90, width=\textwidth]{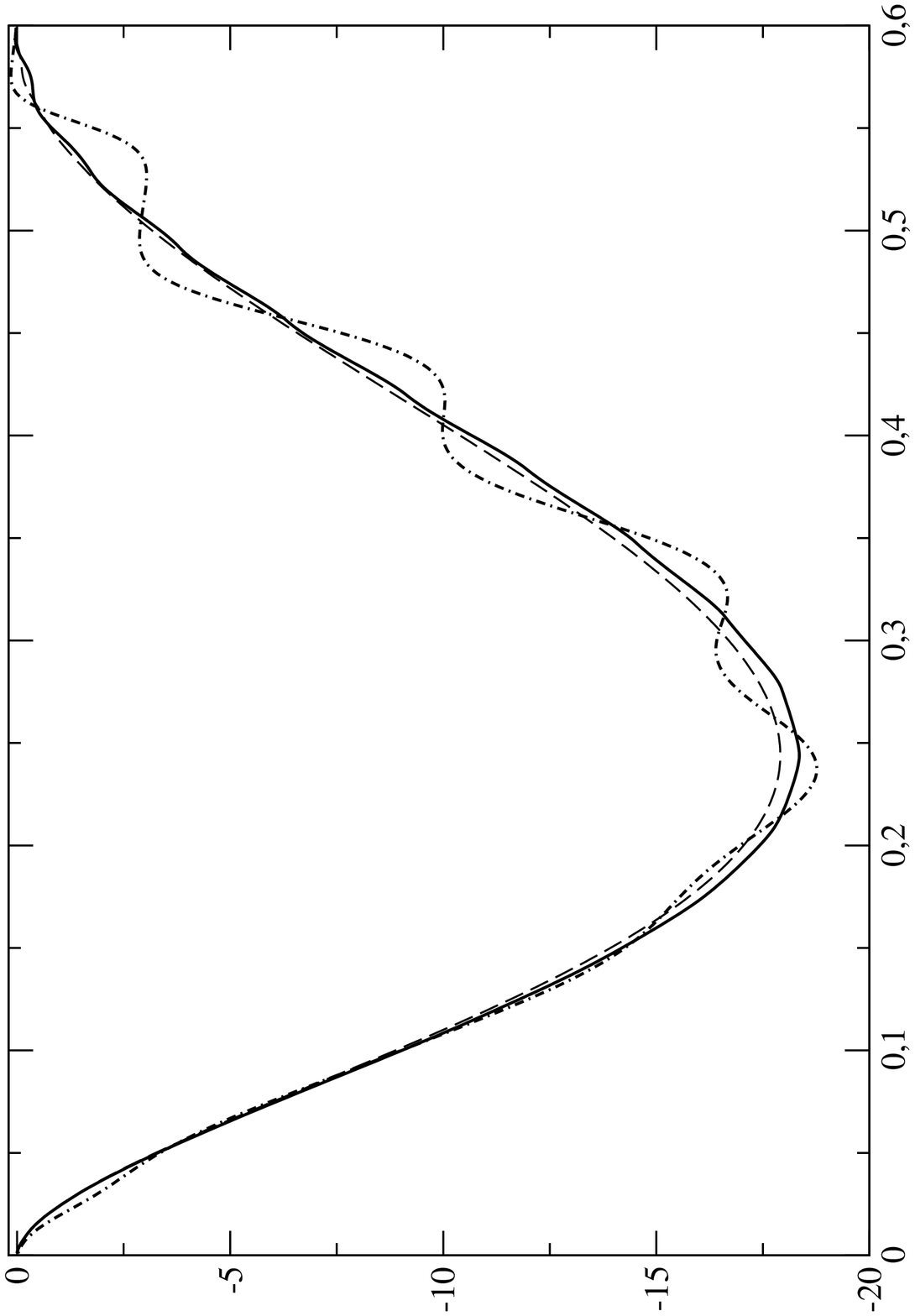}}  
\end{minipage}
\caption{Heat transfer problem, rarefied regime: temperature (top) and velocity (bottom) at
  time $1.3 \, 10^{-3}$, $\Kn = 1000$. The solid line is the LDV method with
  100 velocities, the dot-dashed line is the DVM method (100 velocities), the dashed line is the solution
  given by the DVM method ($400$ velocities, reference solution).  } \label{fig:heat_kn1000}
\end{center}
\end{figure}

\clearpage
\begin{table}[!ht]
  \centering
  \begin{tabular}[c]{|l|c|c|c|c|c|}
    \hline
     & Sod   
& Blast waves
& Heat transfer  \\
\hline
DVM & 0.344 & 949 & 64 \\
\hline
LDV  & 5.136 &  20 & 14 \\
\hline
  \end{tabular}
  \caption{CPU time comparisons (in seconds) between the LDV and DVM methods}
  \label{table:cpu}
\end{table}

\end{document}